\documentclass[a4paper,11pt]{amsart}
\usepackage[latin2,utf8]{inputenc}
  \usepackage{xcolor}

\usepackage{amsmath,amssymb,amsthm, amsfonts}
\usepackage{verbatim}
\usepackage{graphicx}
\usepackage{arydshln}
\usepackage{xcolor}

 \setlength{\oddsidemargin}{-0mm}
\setlength{\evensidemargin}{-0mm} \setlength{\topmargin}{-15mm}
\setlength{\textheight}{260mm} 
 \setlength{\textwidth}{166mm}

 \numberwithin{equation}{section}
\numberwithin{figure}{section}
\numberwithin{table}{section}

\newcommand{\vr}{\varrho}
\newcommand{\R}{\mathbb{R}}

\newcommand{\cC}{\mathcal{C}}
\renewcommand{\div}{{\rm div}\,}
\newcommand{\dx}{\,{\rm d}x}
\newcommand{\dy}{\,{\rm d}y}
\newcommand{\ds}{\,{\rm d}s}
\newcommand{\dt}{\,{\rm d}t}
\newcommand{\dtau}{\,{\rm d}\tau}
\newcommand{\de}{\partial}
\newcommand{\dvr}{\delta\! \vr}
\newcommand{\dv}{\delta\! v}
\newcommand{\du}{\delta\! u}
\newcommand{\dX}{\delta\! X}
\newcommand{\dA}{\delta\! A}

\newcommand{\dQ}{\delta\! Q}
\newcommand{\ep}{\varepsilon}

\def\cC{{\mathcal C}}

\def\divx{\, \hbox{\rm div}_x\,  }
\def\divy{\, \hbox{\rm div}_y\,  }
\def\divu{\, \hbox{\rm div}_u\,  }
\newcommand{\supp}{{\rm supp}\;}

\def\Id{\hbox{\rm Id}}

\renewcommand{\div}{\mbox{\rm div}\;\!}

\newcommand{\with}{\quad\hbox{with}\quad}
\newcommand{\andf}{\quad\hbox{and}\quad}

\newtheorem{thm}{Theorem}
\newtheorem{lem}{Lemma}
\newtheorem{cor}{Corollary}

\begin{document}

\title[
Stability for inhomogeneous Navier-Stokes equations]{
Stability of the density patches problem with vacuum for incompressible inhomogeneous viscous flows}
\author[R Danchin, PB Mucha and T Piasecki]{R. Danchin$^*$, P.B. Mucha$^\dag$ and T. Piasecki$^\dag$}

\thanks{$^*$ Laboratoire d’Analyse et de Mathématiques Appliquées, Universit\'e Paris-Est Cr\'eteil}
\thanks{$^\dag$Institute of Applied Mathematics and Mechanics, University of Warsaw}

\begin{abstract} 
We consider the inhomogeneous incompressible Navier-Stokes system in a smooth two or three dimensional bounded domain, in the case where the initial density is only bounded. 
Existence and uniqueness for such initial data was shown recently in \cite{DM1}, but the stability issue was left open. 
After observing that the solutions constructed in \cite{DM1} 
have  exponential decay, a result of independent interest, 
we prove the stability with respect to initial 
data, first in Lagrangian coordinates, and then in the  Eulerian frame. We actually  obtain  stability in $L_2(\R_+;H^1(\Omega))$ for the velocity and in a negative Sobolev space for the density.  

Let us underline that, as opposed to prior works, 
in case of vacuum, our stability estimates are not weighted by the initial densities. Hence, our result applies in particular to the classical density patches problem, where the density is a characteristic function.
\end{abstract}

\maketitle
\noindent{\bf Keywords:} Stability, inhomogeneous flows, rough density, vacuum.
\smallbreak\noindent {\bf AMS subject classification: 35Q30, 76N10}

\section{Introduction} \label{sec:intro}


\bigbreak

We are interested in the following {\em inhomogeneous incompressible} Navier-Stokes system: 
\begin{equation} \label{sys} 
\left\{ \begin{array}{lr} 
\vr_t+v\cdot \nabla\vr=0 & {\rm in } \ \:\R_+\times\Omega,  \\[1ex]
\vr v_t+\vr v \cdot\nabla v-\Delta v +\nabla P=0 & {\rm in }\  \:\R_+\times \Omega,\\[1ex]
\div v=0 & {\rm in }\ \: \R_+\times \Omega.
\end{array}
\right.
\end{equation}
This system describes the motion of incompressible fluids with variable density and originates from simplified models in geophysics. 
The unknowns in the above system are the velocity $v$, the density $\vr$ and the pressure $P,$ depending on the time variable $t\geq0$ 
and on the space variable $x\in\Omega$ where the fluid domain $\Omega$ 
 is  a smooth bounded  subset  of $\R^d$ with  the physical  dimensions $d=2,3.$ 
 
The system is supplemented with the initial data 
\begin{equation} \label{ic}
    \vr|_{t=0}=\vr_0 \andf  v|_{t=0} =v_0.
\end{equation}
At the boundary, we prescribe the no-slip condition
\begin{equation} \label{bc}
v|_{\de\Omega} = 0.    
\end{equation} 

The existence of weak solutions to \eqref{sys} is nowadays well understood and the state of the art on this issue is rather similar to that of the classical incompressible Navier-Stokes system (i.e. with constant density). The analysis goes back to the work of A. Kazhikhov \cite{Kaz} who showed global existence of weak solutions for initial density bounded away from zero. This constraint was removed by J. Simon in \cite{Simon}. Later, P.-L. Lions \cite{PLL} showed that the density is a renormalized solution to the continuity equation, this allowed in particular to treat the case of density-dependent viscosity in \cite{Des}.
Still in the framework of weak solutions, 
F. Fanelli and I. Gallagher investigated recently in \cite{FG} the fast rotation limit of \eqref{sys} supplemented with 
a Coriolis force.

Producing  `strong  solutions' (by strong, we mean solutions 
having  the uniqueness property) requires more constraints on the data: enough regularity and no vacuum, typically. 
Roughly speaking, according to the classical literature,
 for smooth enough data and provided
the density does not vanish,
we have  global existence of strong solutions  
\emph{even for  large data}  in dimension two, 
and, like for the constant density 
case, for small enough initial velocity in dimension three.
For such results in the bounded domain case, one can 
refer to the pioneering work by O.A. Ladyzhenskaya and 
V.~Solonnikov in \cite{LS} (further extended to less regular data 
by the  first author in \cite{D2}). 

A number of works have been dedicated to solving \eqref{sys} in  $\Omega=\R^d$ in so-called
`critical regularity frameworks'. The underlying idea
(that originates from Fujita and Kato's work \cite{FK}
for the constant density case) is that `optimal' 
functional spaces for well-posedness of \eqref{sys} 
have to share its scaling invariance, namely, for all 
$\ell>0,$
\begin{equation}\label{eq:critical}
(\varrho,v,P)(t,x) \leadsto (\varrho,\ell v,\ell^2P)(\ell^2t,\ell x)\andf (\varrho_0,v_0)(x) \leadsto (\varrho_0,\ell v_0)(\ell x).
\end{equation}
Observing that the couple of homogeneous Besov space
$\dot B^{\frac d2}_{2,1}(\R^d)\times
\bigl(\dot B^{\frac d2-1}_{2,1}(\R^d)\bigr)^d$ 
indeed possesses this invariance, the first author proved in
 \cite{D0} the well-posedness of \eqref{sys} supplemented
 with initial velocity $v_0$  in $\dot B^{\frac d2-1}_{2,1}(\R^d)$
and initial density  $\varrho_0$  close 
to some positive constant in $\dot B^{\frac d2}_{2,1}(\R^d).$
Note that, owing to the embedding $\dot B^{\frac d2}_{2,1}(\R^d)\hookrightarrow \cC_b(\R^d),$
this forces the density to be continuous. 
Subsequent  improvements have been brought to this approach 
(see e.g. \cite{AP}) but, 
still, the density  has to be `almost' continuous. 
In particular, one cannot consider initial densities 
that have  a jump across an interface, even a smooth one. 
\medbreak
Toward considering less regular densities, a first 
breakthrough has been brought by the first two authors
in \cite{DM0} and \cite{DM5}:  
 taking advantage of Lagrangian coordinates (that will be
presented below), they 
established well-posedness results 
for densities that are possibly discontinuous 
along interfaces, provided the jump is small enough. 

Then, in \cite{PZZ}, by a totally different approach, M. Paicu, 
P. Zhang
and Z. Zhang succeeded in proving the global existence in $\R^2$ for $v_0 \in H^s, \; s>0$ and in $\R^3$ for $v_0 \in H^1$ with $\|v_0\|_2\|\nabla v_0\|_2$ sufficiently small, 
provided the initial density satisfies
$$0<c_0\leq\vr_0\leq C_0<\infty.$$
In dimension $3,$ this work was extended in \cite{CZZ} to initial 
velocities that are only in $H^s$ for some $s>1/2.$
Still, the density  has to be  bounded  away from zero, and the solution in not time continuous with values in $H^s.$
Very recently, P. Zhang in \cite{Zhang20} achieved the critical regularity  $\dot B^{1/2}_{2,1}$ for the initial velocity,
but did not address the uniqueness issue in this setting. 

For more results where the initial density is allowed 
to be discontinuous but, still, strictly positive, 
the reader may  refer among others to \cite{GGJ,HPZ,FZ} and to a recent result \cite{CJ}, where an inflow boundary condition is considered. Let us also mention that global well-posedness in the half-space $\R^d_+$ with initial density only bounded but close to a positive constant was shown in \cite{DZ1}. 
\medbreak
All the above results require the strict positivity of the initial density. 
To our knowledge, the existence of unique solutions in presence of 
vacuum has been  first proved in \cite{CK} for rather high
regularity of the initial density and velocity,
(namely $\vr_0 \in L^{3/2} \cap H^2$ and $u_0\in H^2$)
and provided the following compatibility condition is satisfied:  
\begin{equation} \label{comp}
-\Delta v_0+\nabla P_0=\sqrt{\vr_0}g \quad \textrm{for some} \; g \in L_2 \; {\rm and} \; P_0 \in H^1.
\end{equation}
Global existence of unique solutions in a 3D bounded domain or in  $\R^3$ under the same compatibility condition and smallness of $\|u_0\|_{\dot H^{1/2}}$ was shown in \cite{CHW}. 

Condition \eqref{comp} was removed in \cite{Li}, where local well-posedness in a bounded domain is shown, but still for sufficiently smooth initial density. Global existence in the whole space $\R^3$, again under sufficient regularity of initial density, was proved recently in \cite{HLL}. 
  \medbreak
An important place in the theory of \eqref{sys} is taken by 
the so-called {\em density patch problem}: assuming that
\begin{equation} \label{ic0}
    \vr_0=\alpha_1 \chi_{A_0} + \alpha_2 \chi_{\Omega \setminus A_0}
    \end{equation}
 for some nonnegative constants $\alpha_1, \alpha_2$ and a measurable set $A_0,$ can we say that $\varrho(t)$ has the same structure
 for all time, 
 with persistence of the regularity of the interface~?
 This problem seems to have been first raised 
 by P.-L. Lions in \cite{PLL} in the specific case 
 where   $\vr_0=\chi_{A_0}$ with $A_0 \in \R^2,$ and $\sqrt{\vr_0}u_0 \in L_2.$ The original question was whether for all time $\vr(t)=\chi_{A(t)}$
for some domain  $A(t)$ with the same regularity  as $A_0.$

It turns out that a positive answer is obtained for the $C^1$ regularity as a consequence of the works of the first two authors in 
\cite{DM0,DM5}  if  $\alpha_1, \alpha_2>0$  are close to each other.
Much more complete results have been obtained in the two-dimensional case in \cite{LZ1} (case $\alpha_1-\alpha_2$ small), and then in \cite{LZ2} for any positive constants. There, the
authors actually establish the persistence of high `striated' Sobolev regularity for the density. 
Similar results have been proved in the 3D case in \cite{LL}.
The propagation of striated regularity has been adapted to the case 
where the viscosity depends on the density in \cite{PZ2}. 
By a different approach, persistence of H\"older continuity 
of the interface if $\alpha_1,\alpha_2$ are close to each other was shown in \cite{DZ2}.
\smallbreak
Requiring that the initial density is away from zero
precludes to consider the original Lions' problem, namely
the case when  $\alpha_2=0$ in \eqref{ic0}.
Recently in \cite{DM1}, the first and second authors proved the 
well-posedness of \eqref{sys} for only bounded initial density 
\begin{equation} \label{rho0}
0 \leq \vr_0 \leq \vr^*    
\end{equation}
and initial velocity satisfying  
\begin{equation} \label{v0}
v_0 \in H^1_0(\Omega), \quad \div v_0 =0.    
\end{equation}
In the two-dimensional case, the solutions are global
 without any additional condition while,  in the 3D case, 
  $v_0$  has to satisfy some smallness condition
  (as the results of \cite{DM1} are of particular importance for our analysis, they will be recalled  precisely below). 
As a by-product, the authors obtained a positive answer to the Lions question in the case $\vr_0=\chi_{A_0}$: 
persistence of H\"older regularity $C^{1,\alpha}$ 
holds true for any $0<\alpha<1$ in 2D and $0<\alpha<\frac{1}{2}$ in 3D.
\smallbreak
However, the question concerning the stability of the solutions 
was left open in \cite{DM1}. In fact, if the density is bounded away from zero
then  the stability can be proved in the same way as uniqueness, but this is no longer the case if the initial density  is allowed to vanish
(this has to do with  the parabolic character of the momentum equation, which degenerates if the density vanishes). This typically happens if we consider 
the following model configuration: 
 the original density is  $\vr^{(org)}_0$ 
and the perturbation  is  $\vr^{(per)}_0,$ that is
\begin{equation}\label{ic2}
    \vr_0^{(org)}=\chi_{A(0)}\andf\vr_0^{(per)}=\chi_{A(0)} + \phi.
\end{equation}

\begin{figure}[h!] \label{model} 
\centering 
\includegraphics[width=0.55\textwidth]{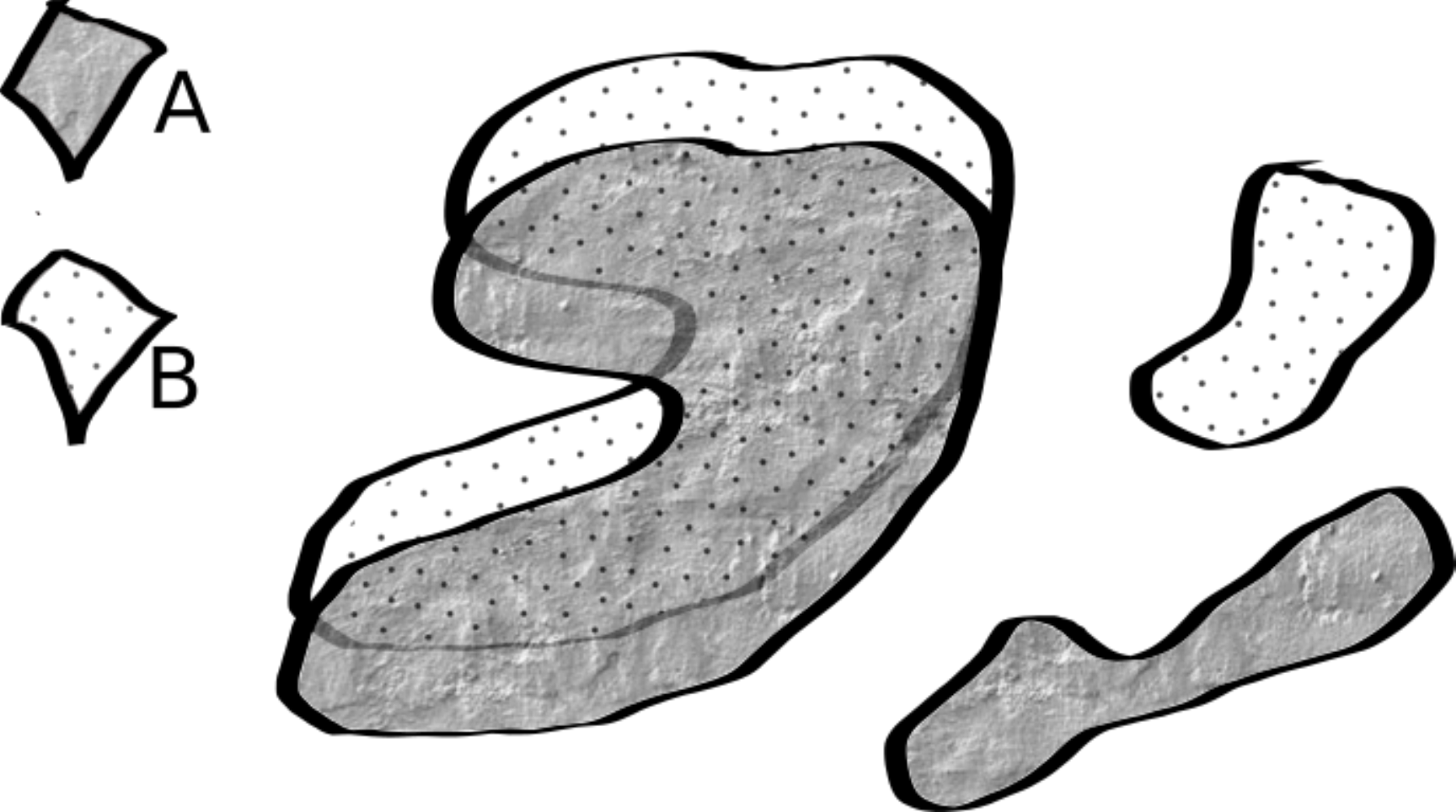}
\caption{Support of $\vr_0^{(org)} = {\bf A}$ and support of $\vr_0^{(per)}={\bf B}$.}
\end{figure} 
The problem happens whenever
\begin{equation*}
    (\supp \vr_0^{(org)} \setminus \supp \vr_0^{(per)} )\cup 
    (\supp \vr_0^{(per)} \setminus \supp \vr_0^{(org)})
    \ \hbox{ is not empty}.\end{equation*}
Indeed, unless $\phi=0,$ the perturbation is large in $L_\infty(\Omega)$
but small in $L_p(\Omega)$ for $p<\infty,$ and we need a functional framework for stability that captures this situation. 
\medbreak
The goal of the present paper 
is to show the stability of solutions to the inhomogeneous Navier-Stokes \eqref{sys} with respect to initial condition of type \eqref{ic}, in a regularity setting that includes the density patches problem \eqref{ic0} 
\emph{even if one of the parameters $\alpha_1,\,\alpha_2$ vanishes}.
In particular, we allow the density to be a characteristic function of some set. In the most pathological case the supports of $\vr_0^{(org)}$
and $\vr_0^{(per)}$ can be even disjoint.

\medbreak
Before stating our main stability result, we have to recall 
the state-of-the-art concerning the global well-posedness 
for  \eqref{sys} supplemented with general data satisfying \eqref{rho0} and \eqref{v0}.

In dimension $d=2,$   \cite[Thm 2.1]{DM1} states the following result:
\begin{thm} \label{thm:d=2}
Let $\Omega$ be a smooth bounded domain of $\R^2,$ or a two-dimensional torus. Let $\vr_0 \in L_\infty(\Omega)$ satisfy \eqref{rho0} and let $v_0$ satisfy \eqref{v0}. Then, System \eqref{sys} admits a unique solution $(\vr,v,P)$ such that 
\begin{equation}\label{reg:1}
\begin{aligned}
&\vr\in L_\infty(\R_+;L_\infty),\quad  v \in L_\infty(\R_+;H^1), \\ &\sqrt{\vr}v_t, \nabla^2 v,\nabla P \in L_2(\R_+;L_2), \quad
\nabla v\in L_{1,loc}(\R_+;L_\infty),\\
&\sqrt\vr v\in \cC(\R_+;L_2)\andf \vr\in\cC(\R_+;L_p) \quad \hbox{for all}\ 1\leq p <\infty.
\end{aligned}
\end{equation}
For arbitrarily large but finite time $T>0$ these solutions satisfy
in addition for all $1 \leq r <2, \; 1 \leq m<\infty, s<{1}/{2}$ and $1 \leq p <\infty,$
\begin{equation} \label{reg:2d}
\begin{aligned}
&\nabla(\sqrt{t}P), \nabla^2(\sqrt{t}v) \in L_{\infty}(0,T;L_r(\Omega)) \cap L_2(0,T;L_m(\Omega)), \; v \in H^s(0,T;L_p(\Omega)),\\
& \sqrt{t \vr} \,v_t \in L_\infty(0,T;L_2) \mbox{ \ and \ } \nabla v_t \in L_2(0,T;L_2).
\end{aligned}
\end{equation}
\end{thm}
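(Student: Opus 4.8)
The plan is to obtain the solution by a compactness argument applied to a non-degenerate approximation, and to treat uniqueness separately by passing to Lagrangian coordinates. First I would remove the vacuum at the level of the data, replacing $\vr_0$ by $\vr_0^\ep:=\vr_0+\ep$ so that the momentum equation becomes uniformly parabolic; together with a standard truncation this makes the approximate system solvable by the classical theory of Ladyzhenskaya and Solonnikov. The whole issue is then to produce \emph{a priori} estimates that are uniform in $\ep$ and do not deteriorate as $\ep\to0$.

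The core of these estimates is a two-tier energy method. Testing the momentum equation against $v$ gives the basic balance
\begin{equation*}
\frac12\frac{d}{dt}\int_\Omega \vr|v|^2\dx+\int_\Omega|\nabla v|^2\dx=0,
\end{equation*}
so that $\sqrt\vr\,v\in L_\infty(\R_+;L_2)$ and $\nabla v\in L_2(\R_+;L_2)$. Testing instead against $v_t$ (legitimate since $v_t|_{\de\Omega}=0$ and $\div v_t=0$) yields
\begin{equation*}
\int_\Omega\vr|v_t|^2\dx+\frac12\frac{d}{dt}\int_\Omega|\nabla v|^2\dx=-\int_\Omega\vr\,(v\cdot\nabla v)\cdot v_t\dx.
\end{equation*}
Here the two-dimensional setting is decisive: by the Ladyzhenskaya and Gagliardo--Nirenberg inequalities and $\vr\le\vr^*$, the right-hand side is bounded in terms of $\|\nabla v\|_{L_2}$, $\|\sqrt\vr\,v_t\|_{L_2}$ and the Stokes norm $\|\nabla^2v\|_{L_2}$. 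Reading the momentum equation as the stationary Stokes system
\begin{equation*}
-\Delta v+\nabla P=-\vr v_t-\vr\,v\cdot\nabla v,\qquad\div v=0,\qquad v|_{\de\Omega}=0,
\end{equation*}
and invoking $L_2$ maximal regularity for the Stokes operator on the smooth domain $\Omega$ gives $\|\nabla^2v\|_{L_2}+\|\nabla P\|_{L_2}\lesssim\|\sqrt\vr\,v_t\|_{L_2}+\|v\cdot\nabla v\|_{L_2}$. Feeding this back, using Young's inequality together with the time-integrability of $\|\nabla v\|_{L_2}^2$ coming from the basic balance, the two estimates close by Gronwall's lemma globally in time thanks to the two-dimensional structure. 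This produces the uniform bounds $\sqrt\vr\,v_t,\nabla^2v,\nabla P\in L_2(\R_+;L_2)$ and $\nabla v\in L_\infty(\R_+;L_2)$.

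The refined, time-weighted estimates \eqref{reg:2d} are obtained by repeating these computations after multiplication by powers of $t$ --- in particular, differentiating the momentum equation in time and testing against $t\,v_t$ --- so that parabolic smoothing upgrades the integrability for $t>0$ without demanding more regularity from the data; the fractional statement $v\in H^s(0,T;L_p)$ for $s<1/2$ then follows by interpolation. Note that $\nabla^2(\sqrt t\,v)\in L_2(0,T;L_m)$ for every finite $m$ gives, through the two-dimensional embedding $W^{2,m}\hookrightarrow W^{1,\infty}$, the bound $\sqrt t\,\nabla v\in L_2(0,T;L_\infty)$, whence $\nabla v\in L_{1,loc}(\R_+;L_\infty)$. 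With the $\ep$-uniform bounds secured, the limit $\ep\to0$ is routine by weak compactness, and the transport equation $\vr_t+v\cdot\nabla\vr=0$ --- whose flow is well defined precisely because $\nabla v\in L_{1,loc}(L_\infty)$ --- propagates $0\le\vr\le\vr^*$ and gives $\vr\in\cC(\R_+;L_p)$ for all $p<\infty$.

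The hard part, and the genuinely new difficulty in the vacuum case, is \textbf{uniqueness}. One cannot subtract two solutions in Eulerian variables, because the difference of the momentum equations is weighted by $\vr$ and degenerates where the density vanishes, so no parabolic control of the difference survives. The remedy I would adopt is to pass to Lagrangian coordinates, setting $\eta(t,y)=v(t,X(t,y))$ with $X$ the flow of $v$: there the density is frozen to the merely bounded coefficient $\vr_0(y)$ and the transformed momentum equation takes the form
\begin{equation*}
\vr_0\,\de_t\eta-\div(\cA\nabla\eta)+\cA^{\top}\nabla Q=0,
\end{equation*}
with $\cA=\cA(\nabla X)$ close to the identity on a short time interval. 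Crucially, the second-order operator is uniformly elliptic and carries no $\vr$ weight, so it provides coercivity in space \emph{even on the vacuum set}, the density entering only through the time derivative. A stability estimate for the difference of two Lagrangian solutions, closed in the maximal-regularity norms established above and absorbing the coefficient errors for $t$ small, then yields uniqueness on a short interval; the uniform global bounds finally allow one to iterate and obtain uniqueness on all of $\R_+$.
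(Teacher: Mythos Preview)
The theorem you are trying to prove is \emph{not} proved in this paper. It is quoted verbatim from \cite{DM1} (see the line ``In dimension $d=2,$ \cite[Thm 2.1]{DM1} states the following result'' immediately preceding the statement). The present paper takes Theorems~\ref{thm:d=2} and~\ref{thm:d=3} as input and builds the stability theory (Theorem~\ref{thm:lag} and Corollary~\ref{cor:eul}) on top of them; the only related computations here are the exponential decay estimates of Section~\ref{s:decay}, which \emph{assume} the solution already exists with the regularity \eqref{reg:1}--\eqref{reg:2d}.

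That said, your sketch is a faithful outline of the strategy actually carried out in \cite{DM1}: regularize the density to kill the vacuum, derive $\ep$-uniform energy and time-weighted estimates (the paper indeed tests with $v$, with $v_t$, and with $t\,v_t$, and uses the stationary Stokes estimate exactly as you describe), pass to the limit by compactness, and prove uniqueness in Lagrangian coordinates where the density becomes a frozen bounded coefficient and the divergence part of the operator retains full ellipticity. One nuance worth flagging: the reason Eulerian subtraction fails is not primarily the $\vr$-weight on the time derivative of the velocity difference, but rather that the mass equation for $\dvr$ contains $\dv\cdot\nabla\vr$, which forces one regularity level below $L_\infty$ on $\dvr$ when $\vr$ is merely bounded; the Lagrangian formulation sidesteps this because the density is time-independent there. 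Your description of the uniqueness mechanism is otherwise on target.
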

\noindent
In the three dimensional case, we know from  \cite[Thm 2.2]{DM1} the following result:
\begin{thm} \label{thm:d=3}
Let $\Omega$ be a smooth bounded domain of $\R^3,$ or a three-dimensional torus. Let $\vr_0 \in L_\infty(\Omega)$ satisfy \eqref{rho0} and $v_0$ satisfy \eqref{v0}.
There exists $c>0$ such that if, in addition, 
\begin{equation} \label{small1}
(\vr^*)^{3/2}\|\sqrt{\vr_0}v_0\|_2\|\nabla v_0\|_2 \leq c\mu^2,
\end{equation}
then System \eqref{sys} admits a unique solution $(\vr,v,P)$ satisfying \eqref{reg:1} and, for any finite $T>0$, 
$s<{1}/{2}$ and $1 \leq p <\infty,$
\begin{equation} \label{reg:3d}
\begin{aligned}
&\nabla(\sqrt{t}P), \nabla^2(\sqrt{t}v) \in L_{\infty}(0,T;L_2(\Omega)) \cap L_2(0,T;L_6(\Omega)), \; v \in H^s(0,T;L_p(\Omega)), \\
& \sqrt{t \vr} \,v_t \in L_\infty(0,T;L_2) \mbox{ \ and \ } \nabla v_t \in L_2(0,T;L_2).
\end{aligned}
\end{equation}
\end{thm}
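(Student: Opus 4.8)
The plan is to obtain the solution as a limit of approximate solutions with regularized, strictly positive densities, for which the classical theory (e.g.\ \cite{LS,D2}) applies, and to derive a priori bounds that are \emph{uniform with respect to the lower bound of the density}; uniqueness will be handled separately, in Lagrangian coordinates. The guiding principle throughout is never to divide by $\vr$: all estimates must be phrased through the quantities $\sqrt\vr\,v_t$ and $\nabla v$ together with the upper bound $\vr\le\vr^*$, so that they survive as the vacuum appears.

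First I would establish the basic energy identity by testing the momentum equation in \eqref{sys} with $v$ and using $\div v=0$ together with the continuity equation:
\begin{equation*}
\frac12\frac{d}{dt}\int_\Omega \vr|v|^2\dx + \int_\Omega|\nabla v|^2\dx = 0,
\end{equation*}
which yields $\sqrt\vr\,v\in L_\infty(\R_+;L_2)$ with $\|\sqrt\vr\,v\|_2=\|\sqrt{\vr_0}v_0\|_2$ and $\nabla v\in L_2(\R_+;L_2)$. The heart of the matter is the second-order estimate: testing the momentum equation with $v_t$ gives
\begin{equation*}
\frac12\frac{d}{dt}\int_\Omega|\nabla v|^2\dx + \int_\Omega \vr|v_t|^2\dx
= -\int_\Omega \vr\,(v\cdot\nabla v)\cdot v_t\dx.
\end{equation*}
The right-hand side is bounded by $\|\sqrt\vr\,v_t\|_2\,\|\sqrt\vr\,(v\cdot\nabla v)\|_2$, and then, using $\sqrt\vr\le\sqrt{\vr^*}$, H\"older and the three-dimensional Gagliardo-Nirenberg inequalities $\|v\|_6\lesssim\|\nabla v\|_2$ and $\|\nabla v\|_3\lesssim\|\nabla v\|_2^{1/2}\|\nabla^2 v\|_2^{1/2}$, by $\tfrac12\|\sqrt\vr\,v_t\|_2^2$ plus a term of the form $\vr^*\,\|\nabla v\|_2^3\,\|\nabla^2 v\|_2$. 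Combined with the Stokes estimate of the next step and the conservation of $\|\sqrt\vr\,v\|_2$, the smallness hypothesis \eqref{small1} is exactly what allows a Gronwall/bootstrap argument to close this inequality \emph{globally} in time in 3D, giving $\nabla v\in L_\infty(\R_+;L_2)$ and $\sqrt\vr\,v_t\in L_2(\R_+;L_2)$.

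Next I would read the momentum equation at each fixed time as a stationary Stokes system
\begin{equation*}
-\Delta v+\nabla P=-\vr v_t-\vr\,v\cdot\nabla v=:f,\qquad \div v=0,\qquad v|_{\de\Omega}=0,
\end{equation*}
and invoke the $L_q$ maximal regularity of the Stokes operator on the smooth bounded domain $\Omega$, namely $\|\nabla^2 v\|_q+\|\nabla P\|_q\lesssim\|f\|_q$. With $f\in L_2(\R_+;L_2)$ from the previous step this yields $\nabla^2 v,\nabla P\in L_2(\R_+;L_2)$, completing \eqref{reg:1}. To reach the stronger, time-weighted bounds \eqref{reg:3d}---which cannot hold up to $t=0$ since $v_0$ is merely $H^1$---I would differentiate the momentum equation in time, test with $v_t$ after multiplying by the weight $t$, and control the resulting commutators via the bounds already obtained; this produces $\sqrt{t\vr}\,v_t\in L_\infty(0,T;L_2)$ and $\nabla v_t\in L_2(0,T;L_2)$. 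Applying Stokes regularity in $L_2$ and $L_6$ to the equation for $\sqrt t\,v$, whose source $\sqrt t(\vr v_t+\vr\,v\cdot\nabla v)$ is controlled by these weighted quantities, then gives $\nabla^2(\sqrt t\,v),\nabla(\sqrt t\,P)\in L_\infty(0,T;L_2)\cap L_2(0,T;L_6)$, and the fractional time regularity $v\in H^s(0,T;L_p)$ follows by interpolation.

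Finally, for the construction I would regularize $\vr_0$ to $\vr_0^\ep\in(\ep,\vr^*+\ep)$, solve the corresponding non-degenerate problems, and pass to the limit $\ep\to0$ using the uniform estimates above together with Aubin-Lions compactness, the continuity equation being handled by DiPerna-Lions renormalization since $\nabla v\in L_{1,loc}(\R_+;L_\infty)$. For uniqueness I would pass to Lagrangian coordinates $X(t,y)$, in which the density becomes the fixed function $\vr_0(y)$ and the degeneracy is frozen into the initial label; the difference of two solutions then solves a perturbed Stokes system whose right-hand side is controlled by the solution norms, and a stability estimate in a space adapted to the rough density closes the argument, as in \cite{DM0,DM5}. \textbf{The main obstacle} I anticipate is precisely the interplay of the two difficulties: closing the nonlinear second-order estimate \emph{globally} in three dimensions (which forces the smallness \eqref{small1}) while keeping every constant independent of any positive lower bound on $\vr$, so that the vacuum limit is genuinely justified---the convective term $\vr\,v\cdot\nabla v$ must be absorbed using only $\sqrt\vr$-weighted norms, never a full power of $v_t$.
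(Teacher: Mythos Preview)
The paper does not prove this theorem at all: it is quoted as an already-established result from \cite[Thm~2.2]{DM1} (see the sentence immediately preceding the statement: ``In the three dimensional case, we know from \cite[Thm 2.2]{DM1} the following result''). There is therefore no proof in the present paper to compare your proposal against; the paper only \emph{uses} the solutions provided by this theorem, and its own contribution is the stability result (Theorem~\ref{thm:lag}) together with the exponential decay estimates of Section~\ref{s:decay}.

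That said, your outline is a faithful high-level sketch of how \cite{DM1} actually proceeds: the basic energy identity, the second-order estimate obtained by testing with $v_t$ and handling $\vr\,v\cdot\nabla v$ through $\sqrt\vr$-weighted norms only, the Stokes regularity step, the time-weighted estimates from differentiating in $t$, the approximation by strictly positive densities, and uniqueness via Lagrangian coordinates. One correction is worth making: the uniqueness argument in the presence of vacuum and merely bounded density is precisely the novelty of \cite{DM1}; the earlier papers \cite{DM0,DM5} you cite at the end require the density to be close to a positive constant and are not the right references for this step.
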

Although the above solutions are  unique, the question of their stability remains open so far. 
Here we aim at supplementing the above statements  
with a stability result. 
In order to obtain the most accurate information, 
it is natural  to use Lagrangian coordinates
since, in this setting,  the density is time independent (it only depends on the position of the particles initially). 
Therefore,  the problem is reduced to the control of the difference of the velocities
which, somehow, satisfies a parabolic equation. 
\medbreak
Let us shortly recall how to define  Lagrangian coordinates
in our setting. First, we introduce the flow 
$X:(t,y)\mapsto X(t,y)$ of $v,$
that is  the unique solution to the following ODE:
\begin{equation}\label{L-E}
\left\{    \begin{array}{lr}
   \displaystyle \frac{dX}{dt}=v(t,X(t,y))     & \mbox{in \ }  \R_+\times \Omega, \\[10pt]
   \displaystyle X(0,y)=y     &  \mbox{in \ } \Omega.
    \end{array}\right.
\end{equation}
Integrating \eqref{L-E} yields the following relation between the Eulerian $``x"$ and Lagrangian $``y"$ coordinates:
\begin{equation} \label{Lag1}
    x= X(t,y)=y+\int_0^t v(t',X(t',y))\dt'.
\end{equation}
By the standard theory of ODEs, the above change of coordinates
is well defined 
whenever $v \in L_{1,(loc)}(\R_+;C^{1,0})$. In the coordinates system $(t,y),$  the unknown functions are named as follows:
\begin{equation*}
    u(t,y)=v(t,X(t,y)), \;\;\; \eta(t,y)=\vr(t,X(t,y)), \;\;\; Q(t,y)=P(t,X(t,y)).
\end{equation*}
Let us denote 
\begin{equation} \label{Au}
A_u(t)=\left(\frac{dX}{dy}\right)^{-1}=\left(\Id+\int_0^t \nabla_y u(t',y)\dt'\right)^{-1}=\left[{\rm cof}\left(\Id+\int_0^t \nabla_y u(t',y)\dt' \right)\right]^T,    
\end{equation}
where $\rm cof(\cdot)$ denotes the cofactor matrix and, in the second equality, we used the fact that ${\rm det}A_u=1$ (see e.g. \cite{DM1}).
For a function $f(t,x),$ denote $\tilde f(t,y)=f(t,X(t,y))$. 
Then, owing to the chain rule,
\begin{equation} \label{trans1}
\nabla_x f(t,x)=A_u^T\nabla_y \tilde f(t,y)=:\nabla_u \tilde f(t,y), \quad 
\de_t f(t,x)+v(t,x)\cdot\nabla_xf(t,x)= \de_t \tilde f(t,y).
\end{equation}
In order to transform the divergence operator, observe that
$\divy A_u = 0$ since $\det A_u=1$ .
Therefore, for any vector field $z(t,y)$, one may write
\begin{equation} \label{trans:div1}
\divy(A_u z)=A^T : \nabla_y z + z \cdot \divy(A_u)=A^T : \nabla_y z.
\end{equation}
Hence, if we denote $\tilde w(t,y)=w(t,X(t,y))$
 for any vector field $w(t,x),$ we discover that
\begin{equation} \label{div:id}
\divx w(t,x)=A_u : \nabla_y \tilde w(t,y)=\divy(A_u \tilde w)=:\divu \tilde w.
\end{equation}
Taking all above into account we see that in coordinates $(t,y)$ 
system \eqref{sys} reads
\begin{equation} \label{sys:lag}
\left\{\begin{array}{llr}
    \eta_t=0     & \mbox{in}& \R_+\times\Omega,  \\[1ex]
    \eta u_t -\divu\nabla_u u + \nabla_u Q =0 &\mbox{in}& \R_+\times\Omega,\\   [1ex] 
    \divu u=0 &\mbox{in}& \R_+\times\Omega,\\[1ex]
    u|_{t=0}=v_0, \;\; \eta|_{t=0}=\vr_0 &\mbox{in}& \Omega.
    \end{array}\right.
\end{equation}
The main achievement of this paper  is the
following stability result in the Lagrangian coordinates setting. 
\begin{thm} \label{thm:lag}
Let $\Omega$ be a smooth bounded domain of $\R^d$ with $d=2,3$.
Let $(\vr^1,v^1)$ and $(\vr^2,v^2)$ be two solutions of \eqref{sys} 
with initial data $(\vr_0^1,v_0^1)$ and $(\vr_0^2,v_0^2)$, respectively,  with non identically zero bounded
 $\vr_0^1$ and $\vr_0^2,$ and $v_0^1,$ $v_0^2$ in $H^1_0(\Omega),$
 given either by Theorem \ref{thm:d=2} or by Theorem \ref{thm:d=3} (depending on the dimension). 
Denote by $u^1$ and $u^2$ the 
corresponding velocities in Lagrangian coordinates. 
Finally, set $\du:=u^2-u^1,$ $\dv:=v^2-v^1$ and 
$\dvr:=\vr^2-\vr^1.$

Then, there exists a positive constant $\beta$ depending
only on the shape of $\Omega,$ and such that 
\begin{equation} \label{decay:final}
\sup\limits_{t\in\R_+} \Bigl(e^{\frac{\beta\mu t} {\vr^*\delta^2}}[\|{\rm \min}\Big\{\sqrt{\vr^1_0},\sqrt{\vr^2_0}\Big\}\du(t)\|_2\Bigr) 
+ \|e^{\frac{\beta\mu t} {\vr^*\delta^2}}\nabla \du\|_{L_2(\R_+,L_2)}\leq C \left( \|\sqrt{\vr^1_0}\,\dv_0\|_2 + \|\dvr_0\|_2^{1/2} \right), 
\end{equation}
where $\delta$ stands for the diameter of $\Omega,$
and $\vr^*:=\|\varrho_0\|_{\infty}.$
\end{thm}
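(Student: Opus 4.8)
\medbreak
The plan is to work entirely within the Lagrangian system \eqref{sys:lag}, where the densities $\vr_0^1,\vr_0^2$ are time independent, and to derive one energy identity for $\du=u^2-u^1$. Subtracting the momentum equations of \eqref{sys:lag} for the two solutions, the first point is to recast the inertial term. Since $\vr_0^m:=\min\{\vr_0^1,\vr_0^2\}$ is also time independent, I would split
\[
\vr_0^2u^2_t-\vr_0^1u^1_t=\vr_0^m\,\du_t+(\vr_0^2-\vr_0^m)\,u^2_t-(\vr_0^1-\vr_0^m)\,u^1_t .
\]
On $\{\vr_0^1\le\vr_0^2\}$ the extra contribution equals $\dvr_0\,u^2_t$ with $|\dvr_0|/\sqrt{\vr_0^2}\le\sqrt{\vr^*}$, while on $\{\vr_0^2<\vr_0^1\}$ it equals $\dvr_0\,u^1_t$ with $|\dvr_0|/\sqrt{\vr_0^1}\le\sqrt{\vr^*}$; in both regions the forcing is carried by the \emph{larger} density multiplied by the corresponding $u^i_t$. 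This is precisely what allows the natural weight $\sqrt{\vr_0^m}=\min\{\sqrt{\vr_0^1},\sqrt{\vr_0^2}\}$ to appear in \eqref{decay:final} with no artificial weighting by the initial densities.

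\medbreak
Testing the difference equation by $\du$ (admissible since $\du|_{\de\Omega}=0$) gives, schematically,
\[
\tfrac12\tfrac{d}{dt}\|\sqrt{\vr_0^m}\,\du\|_2^2+\int_\Omega A_{u^1}A_{u^1}^T\nabla\du:\nabla\du\,\dy=F+R,
\]
where $F$ is the density forcing above and $R$ gathers the terms coming from the differences of the viscous and pressure operators. Writing $\div_{u^i}\nabla_{u^i}u^i=\divy(A_{u^i}A_{u^i}^T\nabla_y u^i)$, each viscous difference splits into a principal part acting on $\du$ and a remainder carrying $\dA:=A_{u^2}-A_{u^1}$; since $\det A_{u^i}=1$ and, by the exponential decay of the solutions, $\nabla u^i\in L_1(\R_+;L_\infty)$, the tensor $A_{u^1}A_{u^1}^T$ stays uniformly elliptic, so the principal term is coercive, $\int_\Omega A_{u^1}A_{u^1}^T\nabla\du:\nabla\du\,\dy\ge c_0\|\nabla\du\|_2^2$. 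After integration by parts and use of $\div_{u^1}u^1=\div_{u^2}u^2=0$, the pressure difference likewise reduces to $\dA$-remainders of the type $\int_\Omega Q^i\,\divy(\dA\,u^j)\,\dy$. Bounding $\dA$ by $\int_0^t\|\nabla\du(\tau)\|_2\,\dtau$ and using the finiteness of the solution norms in \eqref{reg:1}, all of $R$ is absorbed by the coercive term via a Gronwall argument, exactly as in the uniqueness proofs of \cite{DM0,DM5,DM1}.

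\medbreak
The crux is the forcing $F$. On $\{\vr_0^1\le\vr_0^2\}$, using $\dvr_0\le\vr_0^2$ I would estimate
\[
\Bigl|\int_\Omega\dvr_0\,u^2_t\cdot\du\,\dy\Bigr|\le\|\sqrt{\dvr_0}\,\du\|_2\,\|\sqrt{\vr_0^2}\,u^2_t\|_2\le\|\dvr_0\|_2^{1/2}\,\|\du\|_4\,\|\sqrt{\vr_0^2}\,u^2_t\|_2,
\]
and symmetrically on the complement. The decisive input is that \eqref{reg:1} gives $\sqrt{\vr_0^i}\,u^i_t\in L_2(\R_+;L_2)$ — and, by the exponential decay, even $e^{\lambda t}\sqrt{\vr_0^i}\,u^i_t\in L_2(\R_+;L_2)$ with $\lambda=\beta\mu/(\vr^*\delta^2)$ — so the awkward $t^{-1/2}$ singularity of $u^i_t$ at $t=0$ never has to be confronted pointwise. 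With $\|\du\|_4\le C\|\nabla\du\|_2$ (Sobolev embedding and Poincaré, valid for $d\le3$), Cauchy--Schwarz in time yields
\[
\int_0^\infty e^{2\lambda t}|F|\,\dt\le C\|\dvr_0\|_2^{1/2}\Bigl(\int_0^\infty e^{2\lambda t}\|\nabla\du\|_2^2\,\dt\Bigr)^{1/2},
\]
and Young's inequality converts the right-hand side into $\tfrac{c_0}{2}\int_0^\infty e^{2\lambda t}\|\nabla\du\|_2^2\,\dt+C\|\dvr_0\|_2$. Since $F$ feeds the energy, which is quadratic in $\du$, at the level $\|\dvr_0\|_2$, its square root is exactly what surfaces in \eqref{decay:final}.

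\medbreak
To generate the exponential weight I would run the whole estimate against $e^{2\lambda t}$: Poincaré's inequality $\|\du\|_2^2\le(\delta^2/c)\|\nabla\du\|_2^2$ together with $\vr_0^m\le\vr^*$ shows that a fixed fraction of the coercive term (whose size is proportional to the viscosity $\mu$) controls $2\lambda\|\sqrt{\vr_0^m}\,\du\|_2^2$, which absorbs the time derivative of $e^{2\lambda t}$ and pins the announced rate $\lambda=\beta\mu/(\vr^*\delta^2)$. Integrating over $\R_+$, using $\du|_{t=0}=\dv_0$ and $\|\sqrt{\vr_0^m}\,\dv_0\|_2\le\|\sqrt{\vr_0^1}\,\dv_0\|_2$, and taking the supremum in time then gives \eqref{decay:final} after taking square roots. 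I expect the genuine obstacle to be the treatment of $F$ without any density weight: it is the min-density splitting, combined with the global (indeed exponentially weighted) $L_2(\R_+;L_2)$ control of $\sqrt{\vr_0^i}\,u^i_t$, that simultaneously removes the density weights and defuses the $t=0$ singularity, while the global-in-time Gronwall control of the $\dA$-remainders $R$ is the remaining, more technical, ingredient.
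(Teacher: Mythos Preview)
Your min-density splitting of the inertial term is a neat and genuinely different device: writing $\vr_0^2u^2_t-\vr_0^1u^1_t=\vr_0^m\,\du_t+F$ with $\vr_0^m=\min\{\vr_0^1,\vr_0^2\}$ and observing that on each of the two sets one has $|\dvr_0|\le\vr_0^i$, so that $|F|\le|\dvr_0|^{1/2}\bigl(\sqrt{\vr_0^1}\,|u^1_t|+\sqrt{\vr_0^2}\,|u^2_t|\bigr)$, does produce the factor $\|\dvr_0\|_2^{1/2}$ on the right and explains the weight $\min\{\sqrt{\vr_0^1},\sqrt{\vr_0^2}\}$ directly, without the paper's introduction of an auxiliary third solution with initial density $\tfrac12(\vr_0^1+\vr_0^2)$. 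That part of the proposal is correct and arguably more elegant than the paper's two-step comparison.

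There is, however, a genuine gap in your treatment of the pressure. When you test the raw difference equation against $\du$, the pressure contribution becomes, as you indicate,
\[
-\int_\Omega Q^2\,\dA^T\!:\nabla u^1\,\dy+\int_\Omega Q^1\,\dA^T\!:\nabla u^2\,\dy,
\]
and these terms contain $\dA$ but \emph{no} factor of $\du$ or $\nabla\du$: they are bounded by $g(t)\,\bigl(\int_0^t\|\nabla\du\|_2^2\,\dtau\bigr)^{1/2}$ with $g$ built out of norms of the \emph{given} solutions $Q^i,u^j$ only. There is nothing here to split with Young's inequality against the coercive term. The energy inequality you obtain is of the form $G'\le aG+g\sqrt G+c$ (with $G$ the energy plus accumulated dissipation, $a\in L_1$, $c$ the data term), and this type of inequality does \emph{not} yield a stability estimate: integrating gives $\sqrt{G(t)}\lesssim\sqrt{G(0)}+\int_0^t g+\ldots$, where $\int_0^\infty g$ is a fixed positive constant independent of $\dvr_0,\dv_0$. (It does not even give uniqueness, since $G'\le g\sqrt G$ with $G(0)=0$ admits nontrivial solutions.) If instead you rearrange as $-\int_\Omega\dQ\,\dA^T\!:\nabla u^1\,\dy+\int_\Omega Q^1\,\dA^T\!:\nabla\du\,\dy$, the second term is fine but you are left with the unknown $\dQ$, for which you have no control.

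This is precisely why the paper introduces the decomposition $\du=w+z$ with $\div_{u^1}z=0$ (Lemma~\ref{Lem:9} and system~\eqref{l21}): the identity \eqref{l22}, $\int_\Omega(\nabla_{u^1}\dQ)\cdot z\,\dy=0$, kills the unknown pressure, leaving only $I_2=\int_\Omega(\nabla_{u^1}-\nabla_{u^2})Q^2\cdot z\,\dy$, which carries \emph{both} $\dA$ and $z$ and hence closes as in \eqref{est:i2}. The uniqueness proofs in \cite{DM0,DM5,DM1} that you cite already rely on this decomposition; it is not a dispensable technicality. Your min-density idea for the forcing could in fact be grafted onto the paper's $w{+}z$ machinery (replacing the intermediate-solution step of Subsection~2.2), but it cannot replace the decomposition itself.
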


Coming back from Lagrangian to Eulerian coordinates, we obtain

\begin{cor} \label{cor:eul}
Under the assumptions of Theorem \ref{thm:lag}, in Eulerian coordinates, we have 
 $$  \sup_{t\in\R_+} \|\dvr(t)\|_{W^{-1}_p} 
   + \|e^{\frac{\beta\mu t} {\vr^*\delta^2}}\nabla \dv\|_{L_2(\R_+,L_2)}
   \leq C_0 \left( 
    \|\sqrt{\vr^1_0}\,\dv_0\|_2 + \|\dvr_0\|_2^{1/2}\right)$$
for $1<p<\infty$ if $d=2$ and $1<p\leq 6$ if $d=3$.    
\end{cor}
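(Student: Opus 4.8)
The plan is to transfer the Lagrangian stability estimate \eqref{decay:final} from Theorem \ref{thm:lag} into the Eulerian frame, so the main task is to control $\dvr$ and $\nabla\dv$ by the already-established bounds on $\du$ and $\nabla\du$. The key conceptual point is that the density is \emph{transported}, so in Lagrangian coordinates $\eta^i=\vr_0^i$ is time-independent; this is precisely what makes the density comparison tractable. First I would relate the two flow maps $X^1$ and $X^2$ associated with $v^1$ and $v^2$ via \eqref{Lag1}: their difference $\dX:=X^2-X^1$ satisfies $\dX(t,y)=\int_0^t \du(t',y)\dt'$, so from the control of $\|\nabla\du\|_{L_2(\R_+;L_2)}$ and the parabolic smoothing in \eqref{reg:2d}--\eqref{reg:3d} one gets that $\nabla_y\dX$ is small in a suitable norm, uniformly in time. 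This is the mechanism that lets us compare quantities defined through the two different changes of variables.

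Next I would handle the density difference. Since $\vr^i(t,x)=\vr_0^i\bigl((X^i)^{-1}(t,x)\bigr)$, the Eulerian difference $\dvr(t)=\vr^2(t)-\vr^1(t)$ is a difference of two pushforwards of bounded functions along two nearby volume-preserving diffeomorphisms. The natural estimate is in a negative-order space: for a test function $\varphi$, one writes $\langle\dvr(t),\varphi\rangle$ as an integral and changes variables back to $y$, producing a term from the difference of initial densities $\dvr_0$ and a term from the difference of the flows $\dX$. The first contributes $\|\dvr_0\|$ directly, while the second is estimated by pairing $\nabla\varphi$ against $\dX$ and using that the maps are measure-preserving ($\det A_{u^i}=1$). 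Duality then yields a bound of $\|\dvr(t)\|_{W^{-1}_p}$ by $\|\dvr_0\|$ plus a multiple of $\sup_t\|\dX(t)\|$ in an appropriate $L_q$ norm with $1/p+1/q=1$; the exponent restriction $p\leq 6$ in $d=3$ is exactly the Sobolev exponent governing the embedding used to estimate $\dX$ through $\nabla\du$, which explains the range of $p$ in the statement.

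For the velocity, I would use the standard identity $v^i(t,x)=u^i\bigl(t,(X^i)^{-1}(t,x)\bigr)$ together with the first relation in \eqref{trans1}, $\nabla_x v = A_u^T\nabla_y u$. Writing $\nabla\dv$ in terms of $\nabla_y\du$, the difference $A_{u^2}^T-A_{u^1}^T$, and the discrepancy between evaluating at $(X^1)^{-1}$ versus $(X^2)^{-1}$, one sees that the leading term is $\nabla_y\du$ transported back to Eulerian variables, and the remaining terms are quadratic in the already-small quantities $\nabla\du$ and $\dX$. Because the change of variables is volume-preserving, the weighted $L_2(\R_+;L_2)$ norm of $\nabla\dv$ is comparable to that of $\nabla\du$, up to these higher-order corrections; the exponential weight $e^{\beta\mu t/(\vr^*\delta^2)}$ passes through unchanged since it depends only on $t$. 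Collecting the pieces and absorbing the small quadratic terms gives the desired estimate with a constant $C_0$ depending on the data through the norms controlled in Theorems \ref{thm:d=2} and \ref{thm:d=3}.

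The main obstacle I anticipate is making the comparison of the two \emph{inverse} flow maps $(X^1)^{-1}$ and $(X^2)^{-1}$ rigorous and quantitative, since the estimate \eqref{decay:final} controls $\du$ and $\nabla\du$ but not directly the inverse maps. One must show that $\|\nabla_y\dX\|$ stays small enough that both $X^i$ remain bi-Lipschitz with uniform constants (so that $A_{u^i}$ and $(X^i)^{-1}$ are well-controlled via the Neumann series in \eqref{Au}), and then propagate smallness to the inverses. The time-integrated regularity $\nabla v\in L_1(\R_+;L_\infty)$ from \eqref{reg:1}, combined with the exponential decay built into \eqref{decay:final}, is what guarantees the integral $\int_0^t\nabla\du$ is globally controlled and the inverse-map comparison closes uniformly in time rather than only on finite intervals.
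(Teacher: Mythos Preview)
Your overall strategy---duality for the density in $W^{-1}_p$ and transferring the Lagrangian bound on $\nabla\du$ to $\nabla\dv$ via the flow maps---matches the paper's. Two steps, however, require a device you have not identified.

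For the density, after changing variables you reach $\int_\Omega \vr_0^2(y)\bigl[\varphi(X^2(t,y))-\varphi(X^1(t,y))\bigr]\dy$ and propose to ``pair $\nabla\varphi$ against $\dX$ and use that the maps are measure-preserving.'' The issue is \emph{where} $\nabla\varphi$ gets evaluated: the mean value theorem places it on the segment $[X^1(t,y),X^2(t,y)]$, but the linear interpolation $(1-\theta)X^1+\theta X^2$ is \emph{not} measure-preserving, so you cannot pass from $\|\nabla\varphi\circ(\cdot)\|_{p'}$ to $\|\nabla\varphi\|_{p'}$. The paper's key trick is to introduce a family of \emph{measure-preserving} intermediate flows $X^s$, $1\leq s\leq 2$, defined as the flow of the divergence-free field $(2-s)v^1+(s-1)v^2$. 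Then $\varphi(X^2)-\varphi(X^1)=\int_1^2(\tfrac d{ds}X^s)\cdot\nabla\varphi(X^s)\ds$ with $\|\nabla\varphi(X^s(t,\cdot))\|_{p'}=\|\nabla\varphi\|_{p'}$ for every $s$, and $\tfrac d{ds}X^s(t,y)=\int_0^t\dv(t',X^s(t',y))\dt'$. This last formula involves the Eulerian $\dv$, so the density bound actually comes \emph{after} the velocity bound.

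For the velocity, your plan to ``absorb the small quadratic terms'' will not work as stated: there is no smallness assumption on the data. The paper writes $\nabla_y\du=K_1+K_2+K_3$ with $K_2=\nabla_yX^1\cdot\nabla\dv(t,X^2)$, solves for $\nabla\dv$, and bounds $K_1$ and $K_3$; the term $K_3$ (the discrepancy from evaluating $\nabla v^1$ at $X^2$ versus $X^1$, handled again via the flows $X^s$) produces a contribution $\|\sqrt t\,\nabla^2v^1\|_3^2\int_0^t\|\nabla\dv\|_2^2\dt'$ on the right-hand side. This is closed by Gronwall, using that $e^{2\beta t}\|\sqrt t\,\nabla^2v^1\|_3^2$ is integrable by \eqref{eq:regularity:4}---not by absorption. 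Incidentally, this approach works entirely with the forward maps $X^i$, so your anticipated obstacle of comparing the inverse maps $(X^i)^{-1}$ never arises.
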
 
In our low regularity setting, the key difficulty for proving 
stability of solutions to \eqref{sys} comes from the partially 
hyperbolic nature of the system. In fact,  if writing 
the system satisfied by the difference 
$(\dvr,\dv,dP)$ of two solutions $(\varrho^1,v^1,P^1)$
and $(\varrho^2,v^2,P^2)$ of \eqref{sys}, then  the mass equation gives:
$$(\partial_t+v^2\cdot\nabla)\dv=\dv\cdot\nabla\vr^2.$$
In our framework where the density is only in $L_\infty(\Omega),$
this forces us to perform estimates for $\dvr$ in 
a space with regularity index equal to $-1.$
Following the duality approach initiated by D. Hoff in \cite{Hoff} and recently
renewed in \cite{DM2} and \cite{MP} (for
the related compressible Navier-Stokes system), we shall 
actually prove stability estimates for
the density in  $W^{-1}_p,$  and in $L_2$ for the velocity. 

A key point in order to get rid of any smallness condition, is to first establish a sufficiently strong time-decay of the solutions that have been constructed in Theorems \ref{thm:d=2} and \ref{thm:d=3}.  This will be achieved
in Section \ref{s:decay}, where by means of rather classical energy arguments, we will even get \emph{exponential} decay,
  owing to the boundedness of the fluid domain. 
Before that, in Section \ref{s:stability}, we shall  compare two global solutions satisfying {\em a priori} those decay properties, and estimate their difference in Lagrangian coordinates
in terms of the difference of the data, getting the result of Theorem \ref{thm:lag}. Finally, we rewrite those estimates in the Eulerian coordinates to obtain Corollary \ref{cor:eul}.

Proving the exponential decay estimates of  Section \ref{s:decay}
 can be achieved by means of a remarkably simple  energy method
 that is performed directly on the nonlinear problem. 
 This is in sharp contrast with the proof of decay estimates
 for compressible Navier-Stokes and related models
 which requires a refined analysis of the linearized 
 system combined with a perturbation argument
 (see among others \cite{DHP,ShiShi,ES,PSZ} and the references therein).
 
%
%
When  comparing  Theorem \ref{thm:lag} with results of \cite{DM1}, a remark is in order concerning the domain. Although Theorems \ref{thm:d=2} and \ref{thm:d=3} hold both for a bounded domain with Dirichlet condition and a torus, we here restricted our analysis to the case of a bounded domain
to  avoid further technical complications. In fact, in  the case of Dirichlet boundary conditions, 
we have  the  basic Poincar\'e inequality at our disposal, which is fundamental  to close the estimates
globally in time. In the torus case,  the corresponding 
 Poincar\'e inequality has an additional term (namely the total momentum of the solution, see Lemma A.1 in \cite{DM1})
 which, although probably harmless, entails serious complications in the 
 proof of  decay estimates. Therefore, we leave the torus case for future research. 
\smallbreak

{\bf Notation.} We use standard notations $L_p$ and $W^k_p$ for Lebesque and Sobolev spaces (the dependency 
with respect to the fluid domain $\Omega$ is omitted). 
For the corresponding norms, we use the short notation: 
$$
\|\cdot\|_p := \|\cdot\|_{L_p}, \qquad \|\cdot\|_{k,p} := \|\cdot\|_{W^k_p}.  $$
In some computations, we will agree
that  $f_p(t)$ denotes a generic function of time which is in $L_p(\R_+)$, and  that  $f_{p,q}(t)$ stands for a function which is in $L_p(\R_+) \cap L_q(\R_+)$. The precise form of these functions may vary from line to line, but the property of
integrability is preserved.


\section{Stability under given decay properties}\label{s:stability}

Throughout this section, we are given two solutions 
$(\vr^1,v^1,P^1)$ and $(\vr^2,v^2,P^2)$ pertaining to data  $(\vr_0^1,v_0^1)$ and $(\vr_0^2,v_0^2),$ and satisfying 
the following properties for some $\beta_0>0$:

\begin{subequations}\label{eq:regularity}
\begin{align}
&\sqrt{\vr}\, e^{\beta_0 t} v_t^i \in L_2(\R_+,L_2), \label{eq:regularity:8}\\[3pt]
& e^{\beta_0 t} \nabla v^i\in L_1(\R_+;L_\infty)\cap L_4(\R_+;L_3)\cap L_2(\R_+;L_6),\label{eq:regularity:2}\\[3pt]
&\sqrt t\, e^{\beta_0 t} v^i\in L_1\cap L_\infty(\R_+;L_\infty),\label{eq:regularity:3}\\[3pt]
&\sqrt t\, e^{\beta_0 t}(\nabla^2v^i,\nabla  P^i)\in L_2(\R_+;L_6),\label{eq:regularity:4}\\[3pt]
&\sqrt t\,e^{\beta_0 t} v_t^i \in L_{4/3}(\R_+;L_6), \label{eq:regularity:5}\\[3pt]
& e^{\beta_0 t} v^i  \in L_1\cap L_\infty(\R_+;L_6), \label{eq:regularity:6}\\[3pt]
&\sqrt t\, e^{\beta_0 t} \nabla v^i\in L_2(\R_+;L_\infty),\label{eq:regularity:1}\\[3pt]
& e^{\beta_0 t} \nabla^2  v^i \in L_1(\R_+;L_r) \; \textrm{with} \; r>d. \label{eq:regularity:7}
\end{align}
\end{subequations} 
\noindent
We denote by $(\eta^i,u^i,Q^i)$ the corresponding solutions in Lagrangian coordinates (hence $\eta^i=\vr_0^i$). 

\begin{lem} \label{l:est_lag}
Let $(\vr,v,P)$ solve \eqref{sys} and satisfy Conditions \eqref{eq:regularity:8} to \eqref{eq:regularity:7}.
 Then, the corresponding Lagrangian solution  $(\eta,u,Q)$ also satisfies  \eqref{eq:regularity:8} to \eqref{eq:regularity:7}.
\end{lem}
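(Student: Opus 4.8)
The plan is to transfer each of the eight conditions \eqref{eq:regularity:8}--\eqref{eq:regularity:7} from $v$ to $u=v\circ X$ by exploiting two structural facts about the Lagrangian change of variables. First, since $\div v=0$, the flow $X$ of \eqref{L-E} is volume preserving, equivalently $\det A_u=1$ as recorded in \eqref{Au}; hence for every fixed $t$ and every $p$ one has the exact identity $\|f\circ X(t)\|_p=\|f(t)\|_p$. In particular the undifferentiated conditions \eqref{eq:regularity:3} and \eqref{eq:regularity:6} pass to $u$ verbatim, because $\|u(t)\|_p=\|v(t)\|_p$ for all $p$ and all $t$ while the temporal weights $\sqrt t\,e^{\beta_0 t}$ are untouched. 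Second, I would control the Jacobian matrices $A_u$ and $A_u^{-1}=\Id+\int_0^t\nabla_y u\,dt'$ in $L_\infty$, uniformly in time.

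For the matrix bound I would close a Gronwall estimate directly from \eqref{eq:regularity:2}. Writing $g(t):=\|\nabla_y u(t)\|_\infty$ and using the inverse of the first relation in \eqref{trans1}, namely $\nabla_y u=(A_u^{-1})^T(\nabla_x v)\circ X$, together with $\|A_u^{-1}\|_\infty\le 1+\int_0^t g$, one gets $g(t)\le\|\nabla_x v(t)\|_\infty\bigl(1+\int_0^t g\,dt'\bigr)$, whence $\|A_u^{\pm1}\|_{L_\infty(\R_+\times\Omega)}\le\exp\bigl(\|\nabla v\|_{L_1(L_\infty)}\bigr)=:C$, a time-independent constant since $\nabla v\in L_1(\R_+;L_\infty)$. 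Because $C$ does not depend on $t$, the identity $\nabla_y u=(A_u^{-1})^T(\nabla_x v)\circ X$ combined with volume preservation yields $e^{\beta_0 t}\|\nabla_y u(t)\|_p\le C\,e^{\beta_0 t}\|\nabla_x v(t)\|_p$ for every $p$ and $t$, so conditions \eqref{eq:regularity:2} and \eqref{eq:regularity:1} transfer to $u$ after taking the relevant $L_q$-norm in time. The pressure gradient $\nabla_u Q=(A_u^{-1})^T(\nabla_x P)\circ X$ is handled identically, giving the $\nabla P$ part of \eqref{eq:regularity:4}.

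The time-derivative conditions \eqref{eq:regularity:8} and \eqref{eq:regularity:5} require one extra observation: from the second relation in \eqref{trans1}, $u_t=(v_t+v\cdot\nabla v)\circ X$, so besides $v_t$ one must also estimate the convective term. Since the density is transported, $\sqrt{\eta(y)}=\sqrt{\vr(t,X(t,y))}$, and volume preservation turns $\|\sqrt\eta\,e^{\beta_0 t}u_t\|_2$ into $\|\sqrt\vr\,e^{\beta_0 t}(v_t+v\cdot\nabla v)\|_2$; the first summand is exactly \eqref{eq:regularity:8} for $v$, and for the second I would bound $\sqrt\vr\le\sqrt{\vr^*}$ and $e^{\beta_0 t}\|v\cdot\nabla v\|_2\le\bigl(e^{\beta_0 t}\|\nabla v\|_3\bigr)\|v\|_6$, which lies in $L_2(\R_+)$ by Hölder in time from \eqref{eq:regularity:2} and \eqref{eq:regularity:6}. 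The analogous splitting, distributing the weight onto $\|v\|_\infty$ via \eqref{eq:regularity:3} and onto $\|\nabla v\|_6$ via \eqref{eq:regularity:2}, settles \eqref{eq:regularity:5}.

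The genuinely delicate conditions are the second-order ones \eqref{eq:regularity:4} and \eqref{eq:regularity:7}, and this is where I expect the main obstacle. Differentiating $\nabla_y u=(A_u^{-1})^T(\nabla_x v)\circ X$ once more produces, schematically, $\nabla_y^2 u\sim\bigl(\int_0^t\nabla_y^2 u\,dt'\bigr)(\nabla_x v)\circ X+(A_u^{-1})^{\otimes2}(\nabla_x^2 v)\circ X$, because $\nabla_y A_u^{-1}=\int_0^t\nabla_y^2 u\,dt'$. Thus $\nabla_y^2u$ appears on both sides and the estimate cannot simply be read off from the corresponding bound on $v$. I would instead set $N(t):=\|\nabla_y^2 u(t)\|_{L_r}$ and, using $\|A_u^{-1}\|_\infty\le C$ and volume preservation, derive $N(t)\le C\|\nabla_x v(t)\|_\infty\int_0^t N\,dt'+C\|\nabla_x^2 v(t)\|_{L_r}$; since $\|\nabla v\|_\infty$ and $\|\nabla^2 v\|_{L_r}$ are both in $L_1(\R_+)$ (the latter by \eqref{eq:regularity:7}, where the hypothesis $r>d$ keeps the Jacobian an $L_\infty$ multiplier so the products close), Gronwall's lemma gives $N\in L_1(\R_+)$ and, reinstating the weight, $e^{\beta_0 t}N\in L_1(\R_+)$. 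Condition \eqref{eq:regularity:4} for $\nabla^2 v$ follows from the same decomposition measured in the norm of \eqref{eq:regularity:4}, the lower-order term being controlled once $N\in L_1$ is known. The bookkeeping of the weights $\sqrt t\,e^{\beta_0 t}$ and of the mixed space-time Hölder exponents is the only real work; every structural estimate reduces to the two facts above.
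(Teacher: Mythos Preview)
Your approach matches the paper's: volume preservation plus $L_\infty$ control of $A_u^{\pm1}$ transfers the zeroth- and first-order conditions, the identity $u_t=(v_t+v\cdot\nabla v)\circ X$ handles \eqref{eq:regularity:8} and \eqref{eq:regularity:5} (with essentially the same H\"older splits of the convective term), and one further differentiation of $\nabla_y u=(A_u^{-1})^T(\nabla_x v)\circ X$ yields \eqref{eq:regularity:7}. The only substantive difference is how the second-order estimate is closed. The paper writes the integrated inequality
\[
\|\nabla_y^2 u\|_{L_1(0,T;L_r)}\leq C\|\nabla_x^2 v\|_{L_1(0,T;L_r)}+C\|\nabla_y A\|_{L_\infty(0,T;L_r)}\|\nabla_y u\|_{L_1(0,T;L_\infty)}
\]
and declares the conclusion; taken literally this is circular, since $\nabla_y A$ is built from $\int_0^t\nabla_y^2 u\,dt'$, so the right-hand side hides a copy of the left and absorption requires a subdivision into intervals on which $\|\nabla u\|_{L_1(L_\infty)}$ is small. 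Your pointwise-in-time Gronwall for $N(t)=\|\nabla_y^2 u(t)\|_{L_r}$ closes the loop directly, without any smallness or subdivision. The paper also asserts that \eqref{eq:regularity:4} follows from boundedness of $A_u$ alone, glossing over the second-derivative contribution that you correctly flag as requiring the same decomposition; your sketch for this point is sound once one puts $\nabla_y A_u^{-1}$ in $L_\infty(\R_+;L_r)$ via the bound $\|N\|_{L_1}$ just obtained and uses \eqref{eq:regularity:1} for the remaining factor.
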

\begin{proof} 
All  properties except for the ones involving a time derivative 
and the last one just follow from  
the corresponding ones  for $v,$ and from the fact that  the matrix $A_u$ is bounded.

For proving  \eqref{eq:regularity:8}, we start from the identity 
$$\sqrt\eta e^{\beta_0t} u_t=\sqrt{\rho\circ X_t}\,e^{\beta_0t}(v_t+v\cdot \nabla v)\circ X_t.
$$
As $X_t$ is measure preserving, the term  with $v_t$ may be bounded by means of 
\eqref{eq:regularity:8}. For bounding the other term, it suffices to observe that
$$\|e^{\beta_0t}\sqrt{\rho\circ X_t}\,v\cdot \nabla v)\circ X_t\|_{L_2(\R_+\times\Omega)}
\leq \rho^*\|e^{\beta_0t/2} v\|_{L_\infty(\R_+;L_6)}\|e^{\beta_0t/2}\nabla v\|_{L_2(\R_+;L_3)}.$$
The first term of the right-hand side may be bounded thanks to \eqref{eq:regularity:6}
while the second one, according to \eqref{eq:regularity:2} and the boundedness of $\Omega.$ 
As regards  \eqref{eq:regularity:5},  one can use again
$u_t=(v_t+v\cdot \nabla v)\circ X_t,$ and  properties    \eqref{eq:regularity:5} to  \eqref{eq:regularity:1} for $v.$  

In order to prove \eqref{eq:regularity:7}, we differentiate the identity $\nabla_y u(t,y)={}^T\!(A_u)^{-1}\nabla_x v(t,X(t,y))$ with respect to $y$. By \eqref{Au} we obtain $$
\|\nabla^2_y u\|_{L_1(0,T;L_r)}\leq C\|\nabla_x^2 v\|_{L_1(0,T;L_r)} +
C\|\nabla_y A\|_{L_\infty(0,T;L_r)}\|\nabla_y u\|_{L_1(0,T;L_\infty)},
$$
which implies \eqref{eq:regularity:7} for $u$ due to  \eqref{eq:regularity:7} for $v$  and to \eqref{eq:regularity:2} for $u$.
\end{proof}

In the rest of this section, we aim at estimating
$$
\du:= u^2-u^1 \mbox{ \ \  and  \ \ } \dQ:= Q^2-Q^1
$$
in terms of the difference of the data.  Obviously, denoting $\Delta_u:=\divu\nabla_u$ and $
\dv_0=v^2_0-v^1_0$, the couple $(\du,\dQ)$ satisfies
\begin{equation}\label{NSL-uniq}
 \begin{aligned}
  &{\vr_0^1}\du_t - \Delta_{u^1}\du + \nabla_{u^1}\dQ= 
(\Delta_{u^2} -\Delta_{u^1}) u^2 - (\nabla_{u^2}-\nabla_{u^1}) Q^2-\dvr_0 \,u^2_t,\\
&\div_{\!u^1}\du=(\div_{\!u^1}-\div_{\!u^2})u^2,\\
&\du|_{t=0}=\dv_0.
\end{aligned}
\end{equation}
Note that
\begin{equation*}
    \dvr_0:=\vr_0^2 - \vr_0^1=\eta^2-\eta^1.
\end{equation*}
By $(\ref{sys:lag})_1,$ functions $\eta^i$ are constant in time, so the perturbation of the density stays the same in time. It is one of the main reasons why we chose the Lagrange coordinates approach to deal with the 
stability issue of system \eqref{sys}. 

\subsection{The case of a nice control of vacuum}

Compared to the proof of uniqueness that has been performed in \cite{DM1}, the troublemaker is 
 the term $\dvr_0 \,u^2_t$    in the equation for $\du$ since  Theorems \ref{thm:d=2}
 or \ref{thm:d=3} only provide us with an information on $\sqrt{\vr^2}\, v_t^2$ (hence on $\sqrt{\vr_0^2}\,u_t^2$) 
while we do not  necessarily have  
 \begin{equation}\label{x3}
 \supp \dvr_0 \subset \supp \vr_0^2.
\end{equation}
In this part, we  assume that the initial densities satisfy 
\begin{equation} \label{def:xnorm}
\|\dvr_0\|_X \,:= \, \|{\dvr_0}/{\sqrt{\vr_0^2}}\|_4 < \infty   
\end{equation}
and  derive a differential inequality which is crucial for proving Theorem \ref{thm:lag}. 
The general case, when \eqref{x3} is not valid,  will be investigated in the  next subsection.  
\medbreak
The idea is to decompose $\du$ into
\begin{equation}\label{l5}
 \du = w+z,
\end{equation}
where $w$ stands for a suitable solution to the divergence equation
\begin{equation}\label{l6}
 \div_{\!u^1} w = (\div_{\!u^1}-\div_{\!u^2})u^2={}^T\!\dA: \nabla u^2=\div (\dA \, u^2),
\end{equation}
with $\dA:=A^1-A^2$ and $A^i:=A_{u^i}.$ 
\smallbreak
Since at $t=0$ by \eqref{Au} we have $\dA=0$, we put $w=0$ at $t=0$.
Note that $A^1$ and $A^2$ need not to be close to $\Id$, but  are invertible and uniformly bounded in time.

\begin{lem}\label{Lem:9}
There exists a solution to \eqref{l6} such that
\begin{equation} \label{est:w}
\begin{aligned}
& \|e^{\beta_0 t}w(t)\|_{2} \leq  f_{1,\infty}(t) \biggl(\int_0^t \|\nabla \du(\tau)\|_2^2 \dtau\biggr)^{1/2}, \\
& \|e^{\beta_0 t}\nabla w(t)\|_{2} \leq f_2(t)  \biggl(\int_0^t \| \nabla \du(\tau)\|_2^2\, d\tau\biggr)^{1/2}, \\
& \|e^{\beta_0 t}w_t(t)\|_{3/2} \leq f_{4/3}(t) \biggl(\int_0^t \| \nabla \du(\tau)\|_2^2\, d\tau\biggr)^{1/2}+
f_{4}(t)\|\nabla \du(t)\|_{2},
\end{aligned}
\end{equation}
where the notation $f_p(t)$ and $f_{p,q}(t)$ is explained at the end of Section \ref{sec:intro}. 
\end{lem}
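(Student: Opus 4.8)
The plan is to solve the $\divu$-type problem \eqref{l6} explicitly, by conjugating it into a flat divergence equation and inverting the latter with the Bogovskii operator $\cB$ of the bounded domain $\Omega.$ Since $\div_{\!u^1} w=\divy(A^1 w)$ by \eqref{div:id}, setting $w:=(A^1)^{-1}W$ turns \eqref{l6} into the genuine divergence problem
\begin{equation*}
\divy W=\divy(\dA\,u^2)\quad\text{in }\Omega,\qquad W|_{\de\Omega}=0.
\end{equation*}
The right-hand side is an exact divergence whose flux $\dA\,u^2$ vanishes on $\de\Omega$ because $u^2|_{\de\Omega}=0$; in particular it has zero mean, so the solvability condition holds and I may take $W=\cB\bigl(\divy(\dA\,u^2)\bigr),$ i.e. $w=(A^1)^{-1}\cB\bigl(\divy(\dA\,u^2)\bigr).$ As $\cB$ depends only on $\Omega$ it commutes with $\de_t,$ and the two properties I shall lean on are the refined ``negative norm'' bound $\|\cB(\divy F)\|_p\leq C\|F\|_p$ (valid whenever $F\cdot n=0$ on $\de\Omega$) and the classical $\|\nabla\cB g\|_p\leq C\|g\|_p.$ Because $B^i:=\int_0^t\nabla_y u^i\dt'$ is bounded in $L_\infty$ uniformly in time by \eqref{eq:regularity:2}, both $A^1=[{\rm cof}(\Id+B^1)]^T$ and $(A^1)^{-1}=\Id+B^1$ are bounded and invertible, so all norms of $w$ are comparable to those of $W$ up to time-uniform constants, and $w|_{\de\Omega}=0$ as needed. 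I also record $\divy\dA=0$ (since $\divy A^i=0$), whence $\divy(\dA\,u^2)={}^T\!\dA:\nabla_y u^2,$ which is what makes the gradient estimate tractable.

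The single algebraic fact driving everything is that $\dA$ is controlled by the time integral of $\nabla\du$ \emph{measured in $L_2$}. Indeed $A^i$ is a fixed polynomial of degree $d-1$ in the entries of $\Id+B^i,$ so a mean-value argument gives, pointwise in $(t,y),$
\begin{equation*}
|\dA(t)|\leq C\Bigl|\int_0^t\nabla_y\du\dtau\Bigr|
\andf
|(\dA)_t(t)|\leq C\,|\nabla_y\du(t)|+C\,|\nabla_y u^i(t)|\,\Bigl|\int_0^t\nabla_y\du\dtau\Bigr|,
\end{equation*}
the constants depending only on the uniform bounds of $B^1,B^2.$ Taking the $L_2$ norm and using Cauchy--Schwarz in time yields $\|\dA(t)\|_2\leq C\sqrt t\,\bigl(\int_0^t\|\nabla\du\|_2^2\dtau\bigr)^{1/2},$ which is exactly the quantity appearing in \eqref{est:w}; the gained factor $\sqrt t$ will repeatedly absorb the singular weights $1/\sqrt t$ carried by the solution norms in \eqref{eq:regularity:3} and \eqref{eq:regularity:1}.

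The first two bounds then follow by Hölder in space and by reading the time integrability off \eqref{eq:regularity:8}--\eqref{eq:regularity:7}. For $\|e^{\beta_0 t}w\|_2$ I use $\|W\|_2\leq C\|\dA\,u^2\|_2\leq C\|\dA\|_2\|u^2\|_\infty$ together with $\sqrt t\,e^{\beta_0 t}\|u^2\|_\infty\in L_1\cap L_\infty$ from \eqref{eq:regularity:3}, producing $f_{1,\infty}.$ For $\|e^{\beta_0 t}\nabla w\|_2$ I split $\nabla w=(\nabla(A^1)^{-1})W+(A^1)^{-1}\nabla W,$ bound the principal part through $\|\nabla W\|_2\leq C\|{}^T\!\dA:\nabla u^2\|_2\leq C\|\dA\|_2\|\nabla u^2\|_\infty$ with $\sqrt t\,e^{\beta_0 t}\|\nabla u^2\|_\infty\in L_2$ from \eqref{eq:regularity:1}, and absorb the lower-order term $(\int_0^t\nabla^2 u^1)W$ using \eqref{eq:regularity:7} and $W\in W^{1,2}_0.$

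The delicate point, which I expect to be the main obstacle, is $\|e^{\beta_0 t}w_t\|_{3/2}.$ Writing $w_t=(\nabla u^1)W+(A^1)^{-1}W_t$ with $W_t=\cB\bigl(\de_t\divy(\dA\,u^2)\bigr),$ the normal trace of $\de_t(\dA\,u^2)=(\dA)_t u^2+\dA\,u^2_t$ still vanishes (both $u^2$ and $u^2_t$ are zero on $\de\Omega$), so the refined Bogovskii estimate applies again and reduces matters to bounding $(\dA)_t u^2$ and $\dA\,u^2_t$ in $L_{3/2}.$ Via the second pointwise bound above, the term $|\nabla\du|\,|u^2|$ is controlled by $\|\nabla\du\|_2\|u^2\|_6$ with $e^{\beta_0 t}u^2\in L_\infty(\R_+;L_6)$ from \eqref{eq:regularity:6}, giving the $f_4(t)\|\nabla\du(t)\|_2$ summand, while $\dA\,u^2_t$ is controlled by $\|\dA\|_2\|u^2_t\|_6$ with $\sqrt t\,e^{\beta_0 t}u^2_t\in L_{4/3}(\R_+;L_6)$ from \eqref{eq:regularity:5}, giving the $f_{4/3}(t)\bigl(\int_0^t\|\nabla\du\|_2^2\dtau\bigr)^{1/2}$ summand. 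Matching each Hölder exponent to an available class in \eqref{eq:regularity:8}--\eqref{eq:regularity:7} while keeping exactly one factor $e^{\beta_0 t}$ free is the bookkeeping-heavy core of the argument; the conceptual content lies entirely in the reduction of the first paragraph and in the $L_2$-based control of $\dA.$
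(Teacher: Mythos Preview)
Your proposal is correct and follows essentially the same route as the paper: you conjugate the $\div_{u^1}$ problem to a flat divergence equation via $w=(A^1)^{-1}W$ (the paper writes $\bar w$ for your $W$), solve the latter with a Bogovskii-type operator (the paper packages the needed bounds as a separate lemma), extract the key control $\|\dA(t)\|_2\leq C\sqrt t\,\bigl(\int_0^t\|\nabla\du\|_2^2\dtau\bigr)^{1/2}$ from the cofactor structure, and then read off the time integrability from \eqref{eq:regularity:3}, \eqref{eq:regularity:1}, \eqref{eq:regularity:5}, \eqref{eq:regularity:6}, \eqref{eq:regularity:7} exactly as the paper does. The only omissions are bookkeeping details you explicitly flag: the cross term $|\nabla u^i|\,\bigl|\int_0^t\nabla\du\bigr|\,|u^2|$ in $(\dA)_t u^2$ (handled by pairing $\sqrt t\,e^{\beta_0 t}\nabla u^i\in L_2(L_\infty)$ with $e^{\beta_0 t}u^2\in L_4(L_6)$ to land in $L_{4/3}$) and the lower-order piece $(\nabla u^1)W$ in $w_t$; both go through with the same Hölder matching you outline.
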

\begin{proof}
The vector $w$ will be sought in the form 
$    w=(A^1)^{-1} A^1 w= (A^1)^{-1} \bar w $,
where  $\bar w$ is given as a solution to 
\begin{equation}\label{l7}
 \div \bar w \big(= \div (A^1 w) = \div_{u^1} w\big) = (\div_{\!u^1}-\div_{\!u^2})u^2= {}^T\!\dA: \nabla u^2 =\div (\dA \, u^2).
\end{equation}

\medbreak
As a first step in the proof of our claim, let us establish the following bounds:
\begin{equation}\label{eq:www}\begin{aligned}
\|e^{\beta_0 t}\bar w\|_{L_4(0,T;L_2)}&\leq C\|e^{\beta_0 t}\dA \,u^2\|_{L_4(0,T;L_2)}, \\
\|e^{\beta_0 t}\nabla \bar w\|_{L_2(0,T;L_2)}&\leq C\|e^{\beta_0 t}\;{}^T\!\dA:\nabla u^2\|_{L_2(0,T;L_2)}\\
\hbox{\rm and }\ 
\|e^{\beta_0 t}\bar w_t\|_{L_{4/3}(0,T;L_{3/2})}&\leq C\|e^{\beta_0 t}(\dA \,u^2)_t\|_{L_{4/3}(0,T;L_{3/2})}.
\end{aligned}\end{equation}
The existence of a vector field $\bar w$ satisfying \eqref{l7}-\eqref{eq:www} is assured by the following Lemma:
\begin{lem} \label{lem:Bog}  
Let $A$ be a matrix valued function with $\det A\equiv1.$
Consider the following divergence equation in a bounded domain with smooth boundary:
\begin{equation*}
    \div b =f \mbox{ \ \ in \ }  \R_+\times\Omega,  \qquad b=0 \mbox{ \ \ at \ }  \R_+\times\partial\Omega,
\end{equation*}
where $f=A^T:\nabla d=\div(Ad)$ and the average of $f$ is equal zero. 

Then, there exists a  constant $C$  such that for any $\beta\geq0,$
there exists a solution $b$  to the above equation, such that for all 
$t\geq0,$ we have
\begin{equation*}
\begin{aligned}
&\|e^{\beta t} b(t)\|_2 \leq C\|e^{\beta t} A(t) d(t)\|_2, \qquad \|e^{\beta t}\nabla b(t)\|_2 \leq C \|e^{\beta t}A(t)^T \nabla d(t)\|_2,\\[5pt] 
&\|e^{\beta t}b_t(t)\|_{3/2} \leq C\|e^{\beta t}(Ad)_t(t)\|_{3/2}.
\end{aligned}
\end{equation*}
\end{lem}
Lemma \ref{lem:Bog} has been proved without exponential weight by the first two authors in \cite{DM3} (see also
\cite[Lemma A.3]{DM1}). In their proof the function $b$ is given by an explicit formula, so a time weight can be treated as a multiplicative parameter (as the norms in Lemma \ref{lem:Bog} only involve
the space variable).
\medbreak
Now, let us bound the right-hand sides of \eqref{eq:www}. 
In order to emphasize that we do not need any smallness of $\int_0^t\nabla_yu\,\dtau,$
let us derive an explicit formula for $\dA.$

In the two dimensional case, starting from \eqref{Au},  we immediately obtain 
\begin{equation} \label{dA:2d}
\dA(t) = \left[\begin{array}{cc} 
\int_0^t \du_{2,y_2}\dt' & -\int_0^t \du_{1,y_2}\dt'\\[1ex]
-\int_0^t \du_{2,y_1}\dt' & \int_0^t \du_{1,y_1}\dt' 
\end{array}
\right]\cdotp
\end{equation}
In three dimensions we can also use \eqref{Au}, let us compute precisely one entry of $\dA$. We have 
$$
(A_u^i)_{11}(t)=\biggl(1+\int_0^t u^i_{2,y_2}\dt'\biggr)\biggl(1+\int_0^t u^i_{3,y_3}\dt'\biggr)-
\int_0^t u^i_{3,y_2}\dt'\:\int_0^t u^i_{2,y_3}\dt'
$$
for $i=1,2$, therefore 
$$\displaylines{\quad
(\dA(t))_{11}= \int_0^t \du_{2,y_2}\dt' + \int_0^t \du_{3,y_3}\dt' 
+\int_0^t \du_{2,y_2}\dt'\int_0^t u^2_{3,y_3}\dt'+\int_0^t \du_{3,y_3}\dt'\int_0^t u^1_{2,y_2}\dt'
\hfill\cr\hfill
-\int_0^t \du_{3,y_2}\dt'\int_0^t u^2_{2,y_3}\dt'
-\int_0^t \du_{2,y_3}\dt'\int_0^t u^1_{3,y_2}\dt'.\quad}$$
Other entries will have a similar structure, we can write them as
\begin{equation} \label{dA:3d}
(\dA(t))_{ij} = \sum_{1 \leq k,l \leq 3} a^{ij}_{kl}\int_0^t\du_{k,y_l}\dt' 
+ \sum_{1 \leq k,l,m,n \leq 3, \; s\in\{1,2\} }b^{ij}_{k,l,m,n,s}\int_0^t \du_{k,y_l}\dt'\:
\int_0^t u^s_{m,y_n}\dt',
\end{equation}
where $a^{ij}_{kl},b^{ij}_{k,l,m,n,s} \in \{0,1\}$.
Now if $u^1,u^2$ satisfy \eqref{eq:regularity:2} then, by H\"older inequality, we obtain 
\begin{equation}\label{l128}
  \|t^{-1/2} \dA(t)\|_2 \leq C\Big\|t^{-1/2}\int_0^t \nabla \du\dtau\Bigr\|_{2}
  \leq C\biggl(\int_0^t\|\nabla\du\|_2^2\dtau\biggr)^{1/2}.
\end{equation}
Therefore, by \eqref{eq:regularity:1}, we have  
\begin{equation}\label{eq:new}
\|e^{\beta_0 t}\;{}^T\!\dA:\nabla u^2(t)\|_2\leq 
   \|t^{-1/2} \dA(t)\|_2\|e^{\beta_0 t}t^{1/2}\nabla u^2\|_{\infty} \leq  C
   f_2(t) \left(\int_0^t\|\nabla\du\|_2^2\dtau\right)^{1/2}, 
 \end{equation}
 which implies \eqref{est:w} for $\|\nabla \bar w\|_{L_2}$.
Similarly, by \eqref{eq:regularity:3} and \eqref{l128}
\begin{align*}
\|e^{\beta_0 t}\dA\, u^2\|_2
\leq \|t^{-1/2}\dA\|_2\|e^{\beta_0 t}t^{1/2}u^2\|_{\infty}
\leq f_{1,\infty}(t) \biggl(\int_0^t\|\nabla\du(\tau)\|_2^2\dtau\biggr)^{1/2},  
\end{align*}
whence, applying \eqref{eq:www} gives
\begin{equation}\label{l9} \|e^{\beta_0 t} \bar w(t)\|_2 
  \leq f_1(t) \biggl(\int_0^t\|\nabla\du(\tau)\|_2^2\dtau\biggr)^{1/2}. 
\end{equation}
In order to bound $\bar w_t,$ it suffices to derive an appropriate estimate 
in $L_{4/3}(0,T;L_{3/2})$ for 
$$(\dA\,u^2)_t=\dA\,u^2_t+(\dA)_t\,u^2.$$
For the first term, thanks to \eqref{eq:regularity:5} and \eqref{l128} we have
$$
\begin{aligned}
\|e^{\beta_0 t}\dA\,u^2_t\|_{3/2}&\leq \|t^{-1/2}\dA\|_2\|e^{\beta_0 t}t^{1/2}u^2_t\|_6
\leq f_{4/3}(t)\biggl(\int_0^t\|\nabla\du(\tau)\|_2^2\dtau\biggr)^{1/2}. 
\end{aligned}
$$
The other term can be bounded as follows: 
$$\|e^{\beta_0 t}(\dA)_t\,u^2\|_{3/2}\leq
\|(\dA)_t\|_2\|e^{\beta_0 t}u^2\|_6.$$
Differentiating \eqref{dA:3d} with respect to $t$ and using \eqref{eq:regularity:1} for $u^1$ and $u^2,$ 
we see that
$$
\|\dA_t(t)\|_{2}\leq C\biggl(\|\nabla\du(t)\|_{2}+\Big\|t^{-1/2}\int_0^t \nabla \du(\tau)\dtau\Big\|_2
\bigl(\|t^{1/2}\nabla u^1(t)\|_\infty+\|t^{1/2}\nabla u^2(t)\|_\infty\bigr)\biggr).
$$
In the 2D case, owing to \eqref{dA:2d}, one can skip the second term on the right-hand side
of the above inequality. Thus, 
owing to \eqref{eq:regularity:6}, for both $d=2,3$ we obtain
$$
\|\dA_t\,u^2(t)\|_{3/2}\leq f_{4/3}(t) \biggl(\int_0^t\|\nabla\du(\tau)\|_2^2\dtau\biggr)^{1/2}+
f_4(t)\|\nabla \du(t)\|_{2}, 
$$
which, by \eqref{eq:www} implies \eqref{est:w} for $\|\bar w_t\|_{3/2}$.
Altogether, this gives the thesis of Lemma \ref{Lem:9}
for $\bar w$, but not yet for $w$. 
\smallbreak

In order to get \eqref{est:w} for $w$ from the estimates of $\bar w,$ we 
first observe that 
\begin{equation} \label{A1}
\sup_{t\leq T}\|(A_u)^{-1}(t)\|_\infty \leq C \|\nabla u\|_{L_1(0,T;L_\infty)}.
\end{equation}
Therefore
$$    \|e^{\beta_0 t}w\|_{L_4(0,T;L_2)} \leq C\|A^1\|_{\infty}\|e^{\beta_0 t}\bar w \|_{L_4(0,T;L_2)} \leq C \|e^{\beta_0 t}\bar w \|_{L_4(0,T;L_2)}. $$
In order to estimate $\|\nabla w\|_{L^2(0,T;L^2)}$ we proceed as follows:
\begin{align*}
\|e^{\beta_0 t}\bar w \nabla((A^1)^{-1})\|_{L_2(0,T;L_2)} & \leq 
\|\nabla((A^1)^{-1})\|_{L_\infty(0,T;L_r)}\|e^{\beta_0 t}\bar w\|_{L_2(0,T;L_{r*})}\\
& \leq C \|\nabla^2 u^1\|_{L_1(0,T;L_r)}\|e^{\beta_0 t}\bar w\|_{L_2(0,T;L_{r*})}
\end{align*}
with $\frac12 =\frac{1}{r} + \frac{1}{r^*}$, where in the last passage we used \eqref{Au}. Therefore by \eqref{eq:regularity:2} and \eqref{eq:regularity:7} 
$$
\begin{aligned}
    \|e^{\beta_0 t}\nabla w\|_{L_2(0,T;L_2)} &\leq \|e^{\beta_0 t}(A^1)^{-1} \nabla\bar w\|_{L_2(0,T;L_2)} + \|e^{\beta_0 t}\bar w \nabla((A^1)^{-1})\|_{L_2(0,T;L_2)}\\
    & \leq C [\|e^{\beta_0 t}\nabla \bar w\|_{L_2(0,T;L_2)} + \|e^{\beta_0 t} \bar w\|_{L_2(0,T;L_{r^*})} ]. 
\end{aligned}$$
\noindent
Since, in  \eqref{eq:regularity:7}, one can take $r>d$,  one can always ensure that  $r^*<6.$ 
\smallbreak
Hence, Finally we have
$$
\|(A^1)^{-1})_t \bar w\|_{3/2} \leq \|(A^{-1})_t\|_6 \|\bar w\|_2
\leq \|\nabla u^1\|_6 \|\bar w\|_2,
$$
which together with \eqref{A1} implies
$$\begin{aligned}
\|e^{\beta_0 t}w_t\|_{L_{4/3}(0,T;L_{3/2})}
&\leq C\|(A^1)^{-1}\|_{\infty} \|e^{\beta_0 t}\bar w_t\|_{L_{4/3}(0,T;L_{3/2})} +
    C\|\nabla u^1\|_{L_2(0,T;L_6)} \|e^{\beta_0 t}\bar w\|_{L_4(0,T;L_2)}\\
&\leq C \bigl(\|e^{\beta_0 t}\bar w_t\|_{L_{4/3}(0,T;L_{3/2})} + \|e^{\beta_0 t}\bar w\|_{L_4(0,T;L_2)}\bigr),
\end{aligned}$$
which completes the proof of Lemma \ref{Lem:9}.\end{proof} 

\smallbreak
Next, let us restate the equations \eqref{NSL-uniq} as the following system for $(z,\dQ)$: 
\begin{equation}\label{l21}
 \left\{\begin{aligned}
&{\vr^1_0} z_t -\Delta_{u^1} z + \nabla_{u^1}\dQ = 
 (\Delta_{u^2}-\Delta_{u^1})u^2+(\nabla_{u^1}-\nabla_{u^2})Q^2
   - {\vr^1_0} w_t + \Delta_{u^1} w - \dvr_0 \, u^2_t, \\[1ex]
&\div_{\!u^1} z=0, \qquad z|_{t=0}=\dv_0.
 \end{aligned}\right.
\end{equation}
Observe that for a vector field $z$ and functions $f,g$ defined in Lagrangian coordinates we have,
according to \eqref{trans1} and integration by parts, 
\begin{equation} \label{parts:div}
\begin{aligned}
-\int_{\Omega}f \divu z \dy & = -\int_{\Omega}f\div (A_u z)\dy
=\int_{\Omega}A_u z \cdot \nabla_y f\dy\\[5pt] 
& = \int_{\Omega} z \cdot  A_u^T\nabla_yf \dy = \int_{\Omega} z \cdot \nabla_u f\dy, 
\end{aligned}
\end{equation}
which implies
\begin{equation} \label{parts:lap}
-\int_{\Omega} f \Delta_u g \dy = -\int_{\Omega} f \divu (\nabla_u g) \dy 
= \int_{\Omega} \nabla_u f \cdot \nabla_u g \dy.
\end{equation}
These identities allow to test \eqref{l21} by $z$ while 
\eqref{parts:div} implies the following crucial property
thanks to which one does not have to care about the difference of the pressures:
\begin{equation}\label{l22}
 \int_{\Omega} (\nabla_{u^1} \dQ)\cdot z \dy = -\int_{\Omega} \div_{\!u^1} z \, \dQ\dy =0.
\end{equation}
Therefore, using also \eqref{parts:lap}, we obtain
\begin{equation}\label{l23}
 \frac 12 \frac{d}{dt} \int_{\Omega} {\vr_0^1} |z|^2 \dx + \int_{\Omega} |\nabla_{u^1} z|^2\dx = \sum_{k=1}^5 I_k,
\end{equation}
where
$$
\begin{array}{lll}
 &\displaystyle I_1:= \int_{\Omega} \bigl((\Delta_{u^2} -\Delta_{u^1}) u^2\bigr) \cdot z \dy,  \quad 
&  \displaystyle I_2:= \int_{\Omega}  ((\nabla_{u^1}-\nabla_{u^2})Q^2)\cdot z \dy,\\[3ex]
 &\displaystyle  I_3:= - \int_{\Omega} \, {\vr^1_0} \, w_t \cdot z \dy, \quad & \displaystyle  I_4: = \int_{\Omega} (\Delta_{u^1} w)\cdot z\dy,\\[3ex]
 & \displaystyle I_5:= -\int_{\Omega} \, \dvr_0\,u_t^2 \cdot \,z \dy. & 
\end{array}$$
%
In order to bound $I_1,$ we combine   \eqref{eq:regularity:1} and \eqref{l128} to write for all $\ep>0,$
\begin{eqnarray}
| I_1| &\!\!\!=\!\!\!& \left| \int_{\Omega} \div\bigl(((\dA)^T\!A_2+ (A_1^T)\,\dA)\nabla u^2\bigr)\cdot z\dy \right|
\nonumber\\
 &\!\!\!\leq\!\!\!&\int_{\Omega} \bigl|(\dA^T)\,A_2 + (A_1^T)\, \dA\bigr|\:  |\nabla u^2 |\: |\nabla z| \dx
 \nonumber\\
 &\!\!\!\leq\!\!\!& C\|t^{-1/2} \dA\|_2 \|t^{1/2} \nabla u^2\|_\infty \|\nabla z\|_2\nonumber\\\label{est:i1}
  &\!\!\!\leq\!\!\!&\varepsilon \|\nabla z\|^2_2 + C\ep^{-1}\|t^{1/2} \nabla u^2\|_\infty^2  \biggl(\int_0^t \|\nabla \du\|_2^2\dtau\biggr). 
\end{eqnarray}
Next, according to \eqref{eq:regularity:4},\eqref{l128} and to Sobolev embedding
$H^1_0\hookrightarrow L_4,$
\begin{equation*}
 |I_2(t)| \leq  \bigg|\int_{\Omega} \dA \, \nabla Q^2 \cdot z \dx \bigg|\leq C\|t^{-1/2} \dA\|_2 \|t^{1/2} \nabla Q^2\|_4 \|z\|_4, 
\end{equation*}
Therefore, for all $\ep>0,$
\begin{equation} \label{est:i2}
  I_2(t) 
\leq \ep\|\nabla z\|^2_2 + C\ep^{-1}\|t^{1/2} \nabla Q^2\|_4^2  \biggl(\int_0^t \|\nabla \du\|_2^2 \dtau\biggr)\cdotp
 \end{equation}
 Note that  from  H\"older inequality and the Sobolev embedding $H^1(\Omega)\hookrightarrow L_6(\Omega)$, we have
$$
 \|{\vr_0}^{1/4}z\|_{{3}}\leq \|\sqrt{\vr_0}\, z\|_{2}^{1/2}\|z\|_{6}^{1/2}
 \leq C \|\sqrt{\vr_0}\, z\|_{2}^{1/2}\|z\|_{H^1}^{1/2}.
 $$
 Therefore using H\"older inequality, one can write that
 \begin{equation} \label{I3:5}
 \begin{aligned}
  I_3(t)& \leq \|w_t\|_{3/2}| \|\vr_0^1 z\|_{3} 
  \\&\leq C \|w_t\|_{3/2}| \|(\vr_0^1)^{1/4} z\|_{3}  \\
  &\leq \|w_t\|_{3/2}\|\sqrt{\vr_0^1} z\|_2^{1/2}\| z\|^{1/2}_{H^1}.
  \end{aligned}
 \end{equation}
 %

%

 Next, integrating by parts,  we get for all $\ep>0,$ 
\begin{equation} \label{est:i4}
 I_4 \leq  \int_{\Omega} |\nabla_{u^1} w|\, | \nabla_{u^1} z| \dx 
  \leq    \ep \|\nabla_{u^1} z\|_2^2+ C\ep^{-1}\|\nabla_{u^1} w\|_2^2.
\end{equation}

Finally, in order to estimate $I_5,$ we apply 
H\"older  and Young inequality, as well as the embedding
$H^1_0\hookrightarrow L_4$ to get
\begin{align} \label{est:i5}
    I_5 &\leq \left| \int_{\Omega} \frac{\dvr_0}{\sqrt{\vr^2_0}} \, \sqrt{\vr_0^2} u^2_t\, z \dy \right|\nonumber \\
    &\leq
    \biggl\|\frac{\dvr_0}{\sqrt{\vr^2_0}}\biggr\|_{4} \|\sqrt{\vr^2_0} u_t^2\|_{2}\|z\|_4\nonumber\\ &\leq
    C\ep^{-1}\|\sqrt{\vr^2_0} u_t^2\|_2^2 \|\dvr_0\|^2_X + \ep  \|\nabla z\|_2^2,
\end{align}
where $\|\dvr_0\|_X$ is defined in \eqref{def:xnorm}. 
Let us point out that our `nice control of vacuum' hypothesis \eqref{def:xnorm} comes
into play only for handling~$I_5.$
\medbreak
In the end, plugging \eqref{est:i1}, \eqref{est:i2}, \eqref{I3:5}, \eqref{est:i4} and \eqref{est:i5} in  \eqref{l23},  we obtain
$$
\begin{aligned}
\frac12\frac{d}{dt} \|\sqrt{{\vr_0^1}} z(t)\|_2^2 + \|\nabla_{u^1} z\|_2^2  \leq \; & \ep \|\nabla z\|_2^2   
+\left(\|t^{1/2}\nabla u^2\|_\infty^2+\|t^{1/2}\nabla Q^2\|_4^2\right) \int_0^t \|\nabla \du\|^2_2 \dt\\ 
&+ \|\sqrt{\vr_0^2}u_t^2\|_2^2 \|\dvr_0\|_X^2 
+C\|\nabla_{u_1}w\|_2^2
+\|w_t\|_{3/2}\|\sqrt{\vr_0^1}z\|_2^{1/2}\|z\|_{H^1}^{1/2}.
\end{aligned}
$$
Since $\nabla u_1={}^T\,A_{u^1}\nabla z,$ we have
$$\|\nabla_{u^1}z\|_2\geq\|(A_{u^1})^{-1}\|_\infty^{-1}\|\nabla z\|_2.$$
Hence, taking $\ep$ small enough, the above inequality implies 
for some $c_0>0,$
\begin{multline}\label{l33}
\frac{d}{dt} \|\sqrt{{\vr_0^1}} z(t)\|_2^2 + 
c_0\|\nabla z\|_2^2  
\leq  \left(\|t^{1/2}\nabla u^2\|_\infty^2+\|t^{1/2}\nabla Q^2\|_4^2\right) \int_0^t \|\nabla \du\|^2_2 \dt\\ 
+ \|\sqrt{\vr_0^2}u_t^2\|_2^2 \|\dvr_0\|_X^2 
+C\|\nabla_{u_1}w\|_2^2
+\|w_t\|_{3/2}\|\sqrt{\vr_0^1}z\|_2^{1/2}\|z\|_{H^1}^{1/2}. 
\end{multline}
Now, using the fact that $\|\nabla \du\|_2^2 \leq 2(\|\nabla z\|_2^2 + \|\nabla w\|_2^2),$ multiplying \eqref{l33} with
$e^{2\beta t}$ (with $\beta\leq\beta_0$) and adding up the second inequality of \eqref{est:w}, 
we arrive at
%
\begin{multline}\label{l33b}
\frac{d}{dt} \|e^{\beta t}\sqrt{{\vr_0^1}} z(t)\|_2^2 + (\|e^{\beta t}\nabla z\|_2^2+\|e^{\beta t}\nabla w\|_2^2)  \leq 2\beta e^{2\beta t}\|\sqrt{{\vr_0^1}} z(t)\|_2^2  \\[5pt]
\left(\|e^{\beta t}t^{1/2}\nabla u^2\|_\infty^2+\|e^{\beta t}t^{1/2}\nabla Q^2\|_4^2+f_1(t)\right) \int_0^T (\|\nabla z\|_2^2+\|\nabla w\|_2^2) \dt 
+ \|e^{\beta t}\sqrt{\vr_0^2}u_t^2\|_2^2 \|\dvr_0\|_X^2 \\[5pt]
+\|e^{\beta t}w_t\|_{3/2}\|e^{\beta t}\sqrt{\vr_0^1}z\|_2^{1/2}\|e^{\beta t}z\|_{H^1}^{1/2}. 
\end{multline}
For small $\beta$ one can absorb the first term on the right-hand side due to Poincar\'e inequality. By \eqref{eq:regularity}, provided $\beta \leq \beta_0$ we have 
\begin{equation} \label{l33c1}
\|e^{\beta t}t^{1/2}\nabla u^2\|_\infty^2+\|e^{\beta t}t^{1/2}\nabla Q^2\|_4^2+\|e^{\beta t}\sqrt{\vr_0^2}u_t^2\|_2^2 \leq f_1(t),     
\end{equation}
while, by Lemma \ref{Lem:9},
$$\displaylines{
\|e^{\beta t}w_t\|_{3/2}\|e^{\beta t}\sqrt{\vr_0^1}z\|_2^{1/2}\|e^{\beta t}z\|_{1,2}^{1/2} \hfill\cr\hfill\leq 
\biggl(f_{4/3}(t) \biggl(\int_0^t \|\nabla \du\|_2^2\dtau\biggr)^{1/2} +
 f_4(t)\|\nabla \du\|_2 \biggr)\|e^{\beta t}\sqrt{\vr_0^1}z\|_2^{1/2}\|e^{\beta t}z\|_{1,2}^{1/2}.}$$

Taking advantage of Poincar\'e inequality and Lemma \ref{Lem:9}, we bound the right-hand side of the above inequality as follows:
$$\displaylines{
f_{4/3}(t) \biggl(\int_0^t\! \|\nabla \du\|_2^2 \dtau\biggr)^{1/2} \|e^{\beta t}\sqrt{\vr_0^1} z\|_2^{1/2}\|e^{\beta t}\nabla z\|^{1/2}_2 \hfill\cr\hfill\leq 
 f_{4/3}^{4/3}(t) \int_0^t\! \|\nabla \du\|_2^2 \dtau+ f_{4/3}^{2/3}
 \|e^{\beta t}\sqrt{\vr_0^1} z\|_2\|e^{\beta t}\nabla z\|_2,}$$
 whence
\begin{multline} \label{l33c}
f_{4/3}(t) \biggl(\int_0^t \|\nabla \du\|_2^2 \dtau\biggr)^{1/2} \|e^{\beta t}\sqrt{\vr_0^1} z\|_2^{1/2}\|e^{\beta t}\nabla z\|^{1/2}_2 \\\leq f_1(t)\int_0^t \|\nabla \du\|_2^2\dtau+ \ep  \|e^{\beta t}\nabla z\|_2^2 + f_1(t) \|e^{\beta t}\sqrt{\vr_0^1} z\|_2^2,
\end{multline}
and
\begin{equation} \label{l33d}
\begin{aligned}
    f_4(t)\|\nabla \du\|_2  \|e^{\beta t}\sqrt{\vr_0^1} z\|_2^{1/2}\|e^{\beta t}\nabla z\|^{1/2}_2&\leq 
    \ep \|\nabla \du\|_2^2 +
    f_4^2(t) \|e^{\beta t}\sqrt{\vr_0^1} z\|_2\|e^{\beta t}\nabla z\|_2\\ &\leq 
    f_1(t)\|e^{\beta t}\sqrt{\vr_0^1} z\|_2^2+
    \ep (\|e^{\beta t}\nabla z\|_2^2 + \|\nabla \du\|_2^2).
\end{aligned}
\end{equation}
The $\ep$ terms from \eqref{l33c} and \eqref{l33d} can again be absorbed by the left-hand side of \eqref{l33b}.
Then, plugging \eqref{l33c1}-\eqref{l33d} in \eqref{l33b} we obtain 
\begin{multline} \label{l33e}
\frac{d}{dt} (e^{2\beta t}\|\sqrt{{\vr_0}} z(t)\|_2^2) +  e^{2\beta t}(\|\nabla z\|_2^2
+\|\nabla w\|_2^2) \\ \leq
f_1(t) \left(e^{2\beta t}\|\sqrt{{\vr_0}} z(t)\|_2^2) +  \int_0^t e^{2\beta s}(\|\nabla z\|_2^2
+\|\nabla w\|_2^2)\ds \right) + f_1(t) \|\dvr_0\|_X^2.
\end{multline}
Denoting  
\begin{equation*}
    G(t)=e^{2\beta t}\|\sqrt{{\vr_0}} z(t)\|_2^2 + \int_0^t e^{2\beta s}(\|\nabla z\|_2^2
+\|\nabla w\|_2^2) \ds,
\end{equation*}
we rewrite \eqref{l33e} as
\begin{equation} \label{eq:G}
    \frac{d}{dt}G(t) \leq f_1(t)G(t) + f_1(t)\|\dvr_0\|_X.
\end{equation}
We have $G(0)=\|\sqrt{\vr_0}\dv_0\|_2^2$, therefore \eqref{eq:G} implies 
\begin{equation} \label{decay:G}
    G(t) \leq \|\sqrt{\vr_0} \dv_0\|_2^2 e^{\int_0^t f_1(\tau) \dtau} + 
    \|\dvr_0\|_X \int_0^t e^{\int_s^t f_1(\tau) d\tau} f(s)\ds.
\end{equation}
Notice that the first inequality of \eqref{est:w} implies that
\begin{equation} \label{decay:w}
\sup_{t\in \R_+} e^{2\beta t}\|w(t)\|_2^2 \leq f_1(t)\int_0^t \|\nabla \du(\tau)\|_2^2\dtau \leq f_1(t)G(t).   
\end{equation}
Combining \eqref{decay:G} and \eqref{decay:w}, we obtain the following result:
\begin{lem}
Assume that \eqref{x3} holds. Then, there exist a positive 
constant $\beta<\beta_0$ and $C_0$ depending only on the data
such that
\begin{equation} \label{decay:deltau}
    \sup_{t\in [0,\infty)} 
    e^{2\beta t} \|\sqrt{\vr_0} \du \|_2^2 +\int_0^\infty 
    e^{2\beta t} \|\nabla \du\|_2^2 \dt \leq C_0 \bigl(\|\sqrt{\vr_0} \dv_0\|_2^2 + C_2 \|\dvr_0\|_X^2\bigr)\cdotp
\end{equation}
\end{lem}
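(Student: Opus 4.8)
The plan is to read off the claimed bound directly from the Gr\"onwall estimate \eqref{decay:G} and the bound \eqref{decay:w} on $w$, the point being that all the genuine analytic work has already been compressed into the single differential inequality \eqref{eq:G}. The one structural fact still to be exploited is that the generic prefactor $f_1$ lies in $L_1(\R_+)$; this is precisely where the exponential a priori decay \eqref{eq:regularity} pays off, since it is this global-in-time integrability that lets Gr\"onwall's lemma close with no smallness assumption on the data.

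First I would upgrade \eqref{decay:G} to a uniform-in-time bound on $G$. Because $f_1\in L_1(\R_+)$, we have $\int_s^t f_1(\tau)\dtau\leq\|f_1\|_{L_1(\R_+)}$ for all $0\leq s\leq t$, so every exponential factor occurring in \eqref{decay:G} is dominated by $C_0:=e^{\|f_1\|_{L_1}}$. Bounding the remaining time integral, whose integrand is $C_0$ times an $L_1(\R_+)$ function of $s$, by a constant $C_2$, this yields
$$\sup_{t\in\R_+} G(t)\leq C_0\bigl(\|\sqrt{\vr_0}\,\dv_0\|_2^2 + C_2\|\dvr_0\|_X^2\bigr).$$

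From the very definition of $G$, each of its two summands is then controlled by $\sup_t G$: the first term gives $\sup_t e^{2\beta t}\|\sqrt{\vr_0}\,z(t)\|_2^2$, while letting $t\to\infty$ in the second (a nondecreasing quantity) gives $\int_0^\infty e^{2\beta s}(\|\nabla z\|_2^2+\|\nabla w\|_2^2)\ds$. To recover the energy norm of $w$ itself I would invoke the first line of \eqref{est:w}: since $f_{1,\infty}$ is bounded and $\beta\leq\beta_0$, one gets $e^{2\beta t}\|w(t)\|_2^2\leq \|f_{1,\infty}\|_\infty^2\int_0^t\|\nabla\du\|_2^2\dtau\leq C\sup_t G$ uniformly in $t$, and multiplying by $\vr_0\leq\vr^*$ controls $\sup_t e^{2\beta t}\|\sqrt{\vr_0}\,w(t)\|_2^2$ as well.

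Finally I would reassemble $\du=w+z$. Using $\|\sqrt{\vr_0}\,\du\|_2^2\leq 2\|\sqrt{\vr_0}\,z\|_2^2+2\vr^*\|w\|_2^2$ and $\|\nabla\du\|_2^2\leq 2(\|\nabla z\|_2^2+\|\nabla w\|_2^2)$, the four pieces controlled above add up to exactly the left-hand side of \eqref{decay:deltau}, and collecting constants finishes the proof. The only step demanding any care is the transfer of the time weight onto $w$: the $w$-estimates of Lemma \ref{Lem:9} carry the stronger weight $e^{\beta_0 t}$, so one must use $\beta\leq\beta_0$ together with the $L_\infty$ membership of $f_{1,\infty}$ to turn a pointwise-in-$t$ bound into a genuine supremum. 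I do not anticipate any real obstacle at this stage, as the single hard step — propagating the energy estimate globally in time via the $L_1$ control of $f_1$ — has already been absorbed into \eqref{eq:G}.
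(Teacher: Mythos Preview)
Your proposal is correct and follows essentially the same route as the paper: the authors likewise derive \eqref{decay:deltau} by combining the Gr\"onwall output \eqref{decay:G} with the pointwise control \eqref{decay:w} on $w$ (coming from the first line of \eqref{est:w}), and then reassembling $\du=z+w$. Your write-up is in fact slightly more explicit than the paper's about why $f_{1,\infty}\in L_\infty$ and $\beta\leq\beta_0$ turn the $w$-bound into a genuine supremum, but the underlying argument is identical.
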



\subsection{The general case}

The estimate \eqref{decay:deltau} has been obtained
under assumption  \eqref{def:xnorm}. It may happen however that the denominator of \eqref{def:xnorm}
is zero on a subset 
with positive measure, while the numerator is not. To overcome this obstacle, instead of comparing solutions 
emanating from $(\vr_0^1,v_0^1)$ and $(\vr_0^2,v_0^2)$ directly we compare each of them to an appropriate intermediate solution satisfying \eqref{def:xnorm}.   


\smallbreak
To proceed, let us denote by $(\rho^{3/2},v^{3/2},P^{3/2})$ the  velocity solution to \eqref{sys} emanating from $\left( \frac12 (\vr_0^1+\vr_0^2),v_0^2 \right)$ and by $(\eta^{3/2},u^{3/2},Q^{3/2})$ the corresponding solution in Lagrangian coordinates. Then, we look at the following  differences between solutions (recall that the density component of the solution in Lagrangian coordinates is constant in time): 
\begin{equation*}
    \begin{aligned}
    &(\dvr^I,\du^I)=(\frac12 (\vr_0^1+\vr_0^2)-\vr_0^1,\, u^{3/2}-u^1 )\\
    \andf&(\dvr^{II},\du^{II})=(\frac12 (\vr_0^1+\vr_0^2)- \vr_0^2,\, u^{3/2}-u^2).
    \end{aligned}
\end{equation*}
Clearly,  we have 
\begin{equation} \label{delta}
\dvr^I=-\dvr^{II}=\frac12 (\vr_0^2-\vr_0^1)\andf \du=\du^I-\du^{II},
\end{equation}
and  the condition \eqref{x3} is fulfilled in both cases, i.e.
\begin{equation*}
    \supp \dvr_0^{I,II} \subset \supp \frac12 (\vr_0^1 + \vr_0^2).
\end{equation*}
Let us define 
$$
\|\dvr_0^I\|_X = \Big\| \frac{\dvr_0^I}{\sqrt{\vr_0^1+\vr_0^2}}\Big\|_4=\Big\|\frac{\dvr_0^{II}}{\sqrt{\vr_0^1+\vr_0^2}}\Big\|_4\cdotp
$$
Note that, obviously
\begin{equation*}
    |\frac{\vr_0^1-\vr_0^2}{\sqrt{\vr_0^1+\vr_0^2}}|^2 =
    |\vr_0^1-\vr_0^2| |\frac{\vr_0^1-\vr_0^2}{\vr_0^1+\vr_0^2}|
    \leq |\vr_0^1-\vr_0^2|,
\end{equation*}
whence
\begin{equation} \label{dvr}
\|\dvr_0^I\|_X \leq \|\vr_0^1-\vr_0^2\|_2^{1/2}.     
\end{equation}
Denoting $\dQ^I=Q^{3/2}-Q^1$,  we obtain the following system for $(\du^I,\dQ^I)$:  
\begin{equation}\label{NSL-uniq:2}
\left\{ \begin{aligned}
  &{\vr_0^1}\du^I_t - \Delta_{u^1}\du^I + \nabla_{u^1}\dQ^I= 
(\Delta_{u^{3/2}} -\Delta_{u^1}) u^{3/2} - (\nabla_{u^{3/2}}-\nabla_{u^1}) Q^{3/2}-\dvr^I \,u^{3/2}_t,\\
&\div_{\!u^1}\du^I=(\div_{\!u^1}-\div_{\!u^{3/2}})u^{3/2},\\
&\du^I|_{t=0}=\dv_0.
\end{aligned}\right.
\end{equation}
Setting $\dQ^{II}=Q^{3/2}-Q^2,$ we see that $(\du^{II},\dQ^{II})$ satisfies an analogous system with $(\vr^1_0,u^1,\dvr^I)$ replaced by $(\vr^2_0,u^2,\dvr^{II})$
and with the initial condition $\du^{II}|_{t=0}=0$.  
\smallbreak
Let us consider the decompositions 
$$
\du^I = z^I+w^I\andf\du^{II} = z^{II}+w^{II}, 
$$
where the components are defined by \eqref{l6} and \eqref{l21} with obvious replacements of $u^1,u^2$. 
Then, one can repeat the estimates from the previous subsection. In the end, defining 
\begin{equation} \label{def:G12}
\begin{aligned}
    &G^I(t)=e^{2\beta t}\|\sqrt{{\vr_0^1}} z^I(t)\|_2^2 + \int_0^t e^{2\beta s} (\|\nabla z^I(s)\|_2^2
+\|\nabla w^I(s)\|_2^2)\ds,\\
    &G^{II}(t)=e^{2\beta t}\|\sqrt{{\vr_0^2}} z^{II}(t)\|_2^2 + \int_0^t e^{2\beta s}(\|\nabla z^{II}(s)\|_2^2
+\|\nabla w^{II}(s)\|_2^2)\ds,
\end{aligned}
\end{equation}
one obtains the following analogs of \eqref{decay:G} (recall that $\dvr_0^I=-\dvr_0^{II}$ and that the velocity component of the initial data for $\dv^{II}$ is zero) for some  $f^I,f^{II} \in L^1(\R_+)$: 
$$
\begin{aligned}
     G^I(t) &\leq \|\sqrt{\vr_0^1} \dv_0\|_2^2 e^{\int_0^tf^I(\tau) \dtau} + 
    \|\dvr_0^I\|_X^2 \int_0^t e^{\int_s^t f^I(\tau) d\tau} f^I(s)  \ds,\\
     G^{II}(t) &\leq 
    \|\dvr_0^{I}\|_X^2 \int_0^t e^{\int_s^t f^{II}(\tau)\, d\tau} f^{II}(s) \ds,
\end{aligned}    $$
Summing  up the above inequalities and using \eqref{delta} and \eqref{dvr} we obtain 
\begin{equation*} 
\sup\limits_{t\in\R_+} \left\{ e^{\beta t}\left[\|\sqrt{\vr^1_0}z^I(t)\|_2+\|\sqrt{\vr^2_0}z^{II}(t)\|_2 \right] \right\} 
+ \|e^{\beta t}\nabla \du\|_{L^2(\R_+,L^2)}\leq C \left( \|\sqrt{\vr^1_0}\dv_0\|_2 + \|\dvr_0\|_2^{1/2} \right)\cdotp
\end{equation*}
Combining this estimate with analogs of \eqref{decay:w}, where $(w,\du,G)$ has been replaced by 
$(w^{I},\du^{I},G^{I})$ and $(w^{II},\du^{II},G^{II}),$ we obtain \eqref{decay:final}, which completes the proof of Theorem \ref{thm:lag}.


\subsection{Back to the Euler perspective}

In this part, we want to translate the stability result obtained in Theorem \ref{thm:lag}  in terms of Eulerian coordinates, proving Corollary \ref{cor:eul}. 
As already pointed out in the introduction, in the case of only 
bounded initial densities, 
getting a relevant  information 
on $\vr^1(t,\cdot)-\vr^2(t,\cdot)$ in some $L^p$ space (even for $p=1$)
is hopeless; it is more reasonable to compare  functions along two different characteristics fields. 
Here we want to adopt  the language of kinetic theory, 
 considering quantities along characteristics/trajectories,
defined in  terms of Wasserstein metrics like in e.g. \cite{PR,MPesz}.
\smallbreak
More precisely, denote by $X^1$ and $X^2$ the flows defined by \eqref{L-E} for, respectively, $v^1$ and $v^2,$ and take a smooth function $\phi:\Omega \to \R.$ 
Then we consider, for each $t\geq0,$ the quantity
$$I_\phi(t):= \int_\Omega \Big(\vr^1(t,x) -  \vr^2(t,x)\Big)\phi (x) \dx.$$
%
Then, using the change of variables (of Jacobian $1$) $x=X^1(t,y)$ and $x=X^2(t,y)$
for $\vr^1$ and $\vr^2,$ respectively, yields 

\begin{align} \label{eq:deltarho}
I_\phi(t)&=
    \int_\Omega \Big[\vr_0^1 (y) \phi(X^1(t,y)) -  \vr_0^2(y) \phi (X^2(t,y)) \Big]\dy\nonumber\\
    &=\underbrace{\int_\Omega (\vr_0^1(y)- \vr_0^2(y)) \phi(X^1(t,y)) \dy}_{A_1(t)} + 
    \underbrace{\int_\Omega  \vr_0^2(y) (\phi(X^1(t,y)) -\phi(X^2(t,y)))\dy}_{A_2(t)}.
\end{align}
In order to find out the right level of regularity for $\phi,$ let us first
examine the term $A_2(t).$
For suitable functions $\underline{\eta},$ we set
$$\eta^1(t,x):=\underline{\eta} (t,Y^1(t,x)) \andf \eta^2(t,x):=\underline{\eta} (t,Y^2(t,x))\with Y^i(t,\cdot):=(X^i(t,\cdot))^{-1}.$$
Then, using again the fact that $X^1$ and $X^2$ are measure preserving,
we have
\begin{equation} \label{eq:neg}
\begin{aligned}
    &\|\eta^1(t) - \eta^2(t)\|_{W^{-1}_p} =\sup \left\{ 
    \int_\Omega (\eta^1(t,x)-\eta^2(t,x))\phi(t,x)\dx: \phi \in W^1_{p'}(\Omega), \;
    \|\phi\|_{W^1_{p'}} \leq 1 
    \right\}\\[5pt] 
    &= \sup 
    \left\{ \int_\Omega \underline{\eta}(y) [\phi(X^1(t,y))-\phi(X^2(t,y))]\dy : \phi \in W^1_{p'}(\Omega), \;
    \|\phi\|_{W^1_{p'}} \leq 1 \right\}\cdotp
\end{aligned}
\end{equation}
%
Following \cite{Maja}, in order to handle the term $\phi(X^1(t,y)) -\phi(X^2(t,y)),$ we consider 
the  family of  intermediate measure preserving flows  $(X^s)_{1\leq s\leq2}$
between $X^1$ and $X^2$ defined as follows:
\begin{equation*}
    \frac{dX^s}{dt}(t,y)= (2-s) v^1(t,X^s(t,y)) + (s-1) v^2 (t,X^s(t,y)), \qquad 
    X^s(0,y) =y.
\end{equation*}
By the chain rule and the definition of $X^s,$ we have  
$$\begin{aligned}
  \phi(X^2(t,y)) -\phi(X^1(t,y))&= \int_1^2 \frac{d}{ds} \phi(X^s(t,y)) \ds\nonumber\\  &=
  \int_1^2 \biggl(\frac d{ds} X^s(t,y)\biggr)\cdotp \nabla \phi (X^s(t,y)) \ds.
\end{aligned}$$
{}From the definition of $X^s,$ we discover that
$$\frac d{ds} X^s(t,y)=\int_0^t\bigl(v^2(t',X^s(t',y))-v^1(t',X^s(t',y))\bigr)\dt',$$
which implies that
\begin{equation}\label{eq:Xs}
\biggl\|\frac d{ds} X^s(t,\cdot)\biggr\|_p\leq \int_0^t\|\dv\|_p\dt'.
\end{equation}
Furthermore, as said before,  $X^s(t,\cdot)$ is 
measure preserving  for all $t\geq 0,$ and thus 
    $\| \nabla \phi (X^s(t,\cdot)) \|_p = \|\nabla \phi\|_p.$
In the end, using the embedding $H^1_0\hookrightarrow L_p$ for all $1\leq p<\infty$ if
$d=2,$ and all $1\leq p\leq 6$ if $d=3,$ we obtain for these values of $p$ and all $t\geq0,$
\begin{equation} \label{est:A2}
    A_2(t)\leq 
    C\|\vr_0^2\|_\infty  \left( \int_0^\infty e^{2\beta t} \| \nabla \dv\|_2^2 \,dt\right)^{1/2}\|\phi\|_{1,p'}.
\end{equation}
The above inequality reveals that one can take  the functions $\phi$ in the space
\begin{equation*}
    \phi \in W^{1}_{1^+}(\Omega) \mbox{ \ \ if \ \ } d=2
\andf
    \phi \in W^1_{6/5}(\Omega) \mbox{ \ \ if \ \ } d=3.
\end{equation*}
Bounding the term $A_1(t)$ is simpler.
Under the above assumptions on $p,$ we have $W^1_{p'}\hookrightarrow L_2.$
Hence,  using again that $X^1$ is measure preserving, we may write by Cauchy-Schwarz inequality, 
\begin{equation} \label{est:A1}
    A_1(t) 
    \leq  \|\dvr_0\|_2 \|\phi \|_2 \leq C \|\dvr_0\|_2 \|\phi \|_{1,p'}. 
\end{equation}
Altogether, by \eqref{eq:deltarho}, \eqref{eq:neg}, \eqref{est:A2}, \eqref{est:A1} and \eqref{decay:final},  we get
for any finite $p>1$ if $d=2$ and for $p\leq6$ if $d=3,$
\begin{equation} \label{est:deltarho:final}
   \sup_{t\in\R_+} \|\vr^1(t,\cdot) - \vr^2(t,\cdot)\|_{W^{-1}_p} \leq C\biggl(
 \|\vr_0^2\|_\infty  \left( \int_0^\infty e^{2\beta t} \| \nabla \dv\|_2^2 \,dt\right)^{1/2} +\|\dvr_0\|_2\biggr)\cdotp
\end{equation}
Next, let us turn to the velocity estimate. We start from the relation
$$
\du (t,y) = v^2(t,X^2(t,y))-v^1(t,X^1(t,y)),
$$
so
$$    \nabla_y \du (t,y)=  \nabla_yX^2(t,y) \cdotp\nabla_x v^2(t,X^2(t,y))
    - \nabla_yX^1(t,y)\cdotp\nabla_x v^1(t,X^1(t,y)).$$
Hence, denoting $\dX:=X^2-X^1,$ we may write
$$\begin{aligned}
    \nabla_y \du (t,y)
&=\nabla_y\dX(t,y)\cdotp\nabla v^2(t,X^2(t,y))+
\nabla_yX^1(t,y)\cdotp\nabla \dv (t,X^2(t,y))\\&\hspace{5cm}+
\nabla_yX^1(t,y)\cdotp\left[\nabla v^1 (t,X^2(t,y))-\nabla v^1(t,X^1(t,y)) \right]\\&=:K_1+K_2+K_3.
\end{aligned}$$
First, we observe that 
$$\nabla\dv(t,X^2(t,y))={}^T\!A_u^1 K_2.$$
Hence, taking the $L_2(\Omega)$ norm, and using that $X^2(t,\cdot)$ is measure preserving, we get
\begin{equation}\label{eq:dv}
 \|\nabla\dv\|_{2}\leq \|A_u^1\|_\infty  \|K_2\|_{2}\leq \|A_u^1\|_\infty\bigl(\|\nabla\du\|_2
 +\|K_1\|_2+\|K_3\|_2\bigr)\cdotp
\end{equation}
Next, from the definition of $X^1$ and $X^2$ in terms of the Lagrangian velocity, we have
\begin{equation*}
    \|K_1\|_2\leq Ct^{1/2} \left(\int_0^t \|\nabla\du\|_2^2\,dt'\right)^{1/2} \|\nabla v^2\|_\infty.
\end{equation*}
For bounding $K_3,$ we first notice that by the mean value theorem and the definition 
of the intermediate flow $X^s$, we have 
$$K_3=\nabla_yX^1(t,y)\cdotp\biggl(\int_1^2\nabla^2 v^1(t,X^s(t,y))\ds\biggr)\cdotp
\biggl(\frac d{ds}X^s(t,y)\biggr)\cdotp$$
Hence, since $X^s$ is measure preserving, using H\"older inequality and \eqref{eq:Xs} allows to get
$$\begin{aligned}
    \|K_3\|_2 &\leq \|\nabla_yX^1(t)\|_\infty \int_1^2\bigl\|\nabla^2 v^1(t,X^s(t))\bigr\|_3
    \Bigl\|\frac d{ds}X^s(t)\Bigr\|_6\ds\\
&    \leq C\|\nabla_yX^1(t)\|_\infty 
\|\nabla^2 v^1(t)\|_3 \int_0^t \|\dv(t')\|_6\,dt'\\
    &\leq C\|\nabla_yX^1(t)\|_\infty 
    \|t^{1/2}\nabla^2v^1(t) \|_3 \left(\int_0^t \|\nabla \dv\|_2^2\,dt'\right)^{1/2}.
\end{aligned}$$
Altogether, remembering \eqref{eq:dv}, we obtain for all $t\geq0,$
$$
\|\nabla\dv(t)\|_2^2 \leq C \left[ \|\nabla \du(t)\|_2^2 +\|t^{1/2}\nabla v^2\|_\infty^2\int_0^t \|\nabla \du\|_2^2\dt' 
+\|t^{1/2}\nabla^2 v^1\|_3^2\int_0^t\|\nabla \dv\|_2^2\dt'\right]\cdotp 
$$
First multiplying both sides by $e^{2\beta t}$, next integrating in time and 
using Theorem \ref{thm:lag} combined with the properties \eqref{eq:regularity}, and finally applying
Gronwall lemma, we obtain for all $t\geq0,$
\begin{equation*}
    \int_0^t e^{2\beta t'} \|\nabla\dv\|_2^2\dt'
    \leq C_0 \left( \|\sqrt{\vr^1_0}\dv_0\|_2^2 + \|\dvr_0\|_2 \right)
    \exp\left(C\int_0^t e^{2\beta t'}\|t'^{1/2}\nabla^2v^1\|_3^2\dt'\right)\cdotp
\end{equation*}
The term in the exponential may be bounded thanks to \eqref{eq:regularity:4}.
Hence, reverting to  \eqref{est:deltarho:final}
completes the proof of Corollary \ref{cor:eul}.



%
%
%
%


\section{Decay estimates} \label{s:decay}

The goal of this section is to show that the solutions provided by Theorems \ref{thm:d=2} and \ref{thm:d=3}
indeed satisfy Properties \eqref{eq:regularity:8} to \eqref{eq:regularity:7}. This will be an immediate 
consequence of Lemmas \ref{l:decay1}, \ref{l:decay2} and \ref{l:decay3} below. 
\smallbreak
Before tackling the proof, we need to recall elementary properties
of the  solutions to \eqref{sys}. The first one is the  conservation of any $L_p$ norm of the density, 
a consequence of the divergence free property of the velocity field : we have 
$\|\vr(t)\|_p = \|\vr_0\|_p$.    

Of course, as $v$ in $H^1_0(\Omega),$ we have the Poincar\'e inequality: 
\begin{equation} \label{Poin}
\|v\|_{2} \leq C_P \|\nabla v\|_2.    
\end{equation}

\subsection{Decay of space derivatives}
The starting point is the following decay estimate which is a direct consequence of
diffusion and  Poincar\'e inequality:
\begin{lem}
Let $(\vr,v)$ be a solution to \eqref{sys} given either by Theorem \ref{thm:d=2} or by Theorem \ref{thm:d=3}. Then 
\begin{equation} \label{decay:1}
\forall t\geq0,\; 
\int_{\Omega} \vr(t)|v(t)|^2\dx \leq e^{-\beta_1 t}\int_{\Omega} \vr_0|v_0|^2\dx,    
\mbox{ \ \ where  \ \ } \beta_1 =\frac{2\mu}{\vr^* C_P^2}\cdotp 
\end{equation}
\end{lem}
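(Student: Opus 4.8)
The plan is to run the standard kinetic-energy balance directly on the nonlinear system \eqref{sys} and then to close it by means of the Poincaré inequality \eqref{Poin}. Concretely, I would test the momentum equation (with viscosity $\mu$) by $v$ itself and integrate over $\Omega$. The inertial and convective contributions combine into a single time derivative: since $\div v=0$, the continuity equation gives $\de_t\vr=-\div(\vr v)$, whence
$$\int_\Omega \vr(v\cdot\nabla v)\cdot v\dx=\int_\Omega \vr\,v\cdot\nabla\Bigl(\tfrac12|v|^2\Bigr)\dx=-\int_\Omega\div(\vr v)\,\tfrac12|v|^2\dx=\tfrac12\int_\Omega\de_t\vr\,|v|^2\dx,$$
so that $\int_\Omega\vr v_t\cdot v\dx+\int_\Omega\vr(v\cdot\nabla v)\cdot v\dx=\frac{d}{dt}\frac12\int_\Omega\vr|v|^2\dx$. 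The viscous term yields $\mu\|\nabla v\|_2^2$ after one integration by parts, the boundary contribution vanishing thanks to the no-slip condition \eqref{bc}, while the pressure term drops out since $\int_\Omega\nabla P\cdot v\dx=-\int_\Omega P\div v\dx=0$. Altogether this produces the energy identity
$$\frac{d}{dt}\,\frac12\int_\Omega\vr|v|^2\dx+\mu\int_\Omega|\nabla v|^2\dx=0.$$

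To close it, I would use that $\vr\leq\vr^*$ together with \eqref{Poin}:
$$\int_\Omega\vr|v|^2\dx\leq\vr^*\|v\|_2^2\leq\vr^* C_P^2\|\nabla v\|_2^2,$$
so that $\mu\|\nabla v\|_2^2\geq\frac{\mu}{\vr^* C_P^2}\int_\Omega\vr|v|^2\dx=\frac{\beta_1}{2}\int_\Omega\vr|v|^2\dx$. Substituting this back into the energy identity gives $\frac{d}{dt}\int_\Omega\vr|v|^2\dx\leq-\beta_1\int_\Omega\vr|v|^2\dx$, and Grönwall's lemma then yields exactly \eqref{decay:1}.

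The part that actually requires care is the rigorous justification of the two formal manipulations above at the regularity level of \eqref{reg:1}, where the density is only bounded. In particular, $\nabla\vr$ is not a function, so the identity for the convective term must be read through the weak (renormalized) form of the continuity equation in the sense of DiPerna--Lions, as used for \eqref{sys} by Lions \cite{PLL}: one tests the mass equation against $\tfrac12|v|^2$, which is an admissible test function since $v\in L_\infty(\R_+;H^1_0)$ and $\div v=0$. Likewise, the differentiation of $t\mapsto\frac12\int_\Omega\vr|v|^2\dx$ and the pairing $\int_\Omega\vr v_t\cdot v\dx=\int_\Omega(\sqrt\vr\, v_t)\cdot(\sqrt\vr\, v)\dx$ are legitimate because $\sqrt\vr\,v_t\in L_2(\R_+;L_2)$, $\sqrt\vr\,v\in\cC(\R_+;L_2)$ and $\vr\in\cC(\R_+;L_p)$ by \eqref{reg:1}. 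I expect this to be exactly the regularity that makes the energy identity rigorous; should one prefer, the identity can be derived on mollified or Galerkin approximations and then passed to the limit. Once the identity is secured, the remaining steps are elementary.
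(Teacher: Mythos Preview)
Your proof is correct and follows essentially the same route as the paper: derive the energy identity by testing the momentum equation with $v$, then combine the Poincar\'e inequality with the bound $\vr\le\vr^*$ to obtain the differential inequality that yields \eqref{decay:1}. Your discussion of the rigorous justification at the regularity level of \eqref{reg:1} is in fact more careful than the paper, which simply invokes the ``classical energy estimate'' without further comment.
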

\begin{proof} The proof is independent of the space dimension. We start with the classical energy estimate that is obtained testing the momentum equation by $v,$ namely
\begin{equation} \label{ene:1}
\frac12\frac{d}{d t}\int_{\Omega} \vr|v|^2\dx + \mu\int_{\Omega}|\nabla v|^2\dx = 0. 
\end{equation}
Hence, remembering  \eqref{Poin}, we get 
$$\frac{C_P^2}{2\mu} \frac{d}{d t}\int_{\Omega}\vr|v|^2\dx+\int_{\Omega}|v|^2\dx \leq 0.$$
Multiplying both sides by $\vr^*$ and using  the obvious fact that
$\vr^*|v|^2 \geq \vr |v|^2,$
we obtain 
$$
\frac{\vr^*C_P^2}{2\mu} \frac{d}{d t}\int_{\Omega}\vr|v|^2\dx+\int_{\Omega}\vr|v|^2\dx \leq 0,
$$
from which we conclude to \eqref{decay:1}. 
\end{proof}

Our next aim is to establish a  decay estimate for the gradient of the velocity.
\begin{lem} \label{l:decay1}
Let $(\vr,v)$ be a solution to \eqref{sys} given either by Theorem \ref{thm:d=2} or by Theorem \ref{thm:d=3}. Then, there exist  positive constants  $C_0$ and $C_{0,p}$ depending only on the data
(and on $p$ for second one), 
and\footnote{One can take $\beta_2$ of the 
form $\beta_2=c_\Omega \mu /(\vr^*\delta^2)$ where $\delta$ stands for the 
diameter of $\Omega$ and $c$ depends only on the shape of $\Omega.$}
$\beta_2<\beta_1$  such that for all $t\geq0,$ we have 
\begin{align} 
&\|\nabla v(t)\|_2 \leq C_0e^{-\beta_2 t}, \label{decay:2}\\[3pt]
&\|v(t)\|_p \leq C_{0,p}e^{-\beta_2 t}, \label{decay:2c}\\[3pt]
&\int_0^{\infty} e^{2\beta_2 t}\left[\|\sqrt{\vr(t)} v_t(t)\|_2^2 + \|\nabla^2 v(t)\|_2^2 + \|\nabla P(t)\|_2^2\right]\dt \leq C_0,
\label{decay:2a}
\end{align}
where, in \eqref{decay:2c}, one can take any $p\in[1,\infty)$ if $d=2,$
and any $p\in[1,6]$ if  $d=3.$ 
\end{lem}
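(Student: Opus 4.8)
The plan is to establish the three decay estimates \eqref{decay:2}, \eqref{decay:2c} and \eqref{decay:2a} by a weighted energy method, building on the basic $L_2$ decay \eqref{decay:1} already at our disposal. The natural quantity to differentiate is $\int_\Omega \mu|\nabla v|^2\dx$, whose evolution is governed by testing the momentum equation against $v_t$. Indeed, multiplying $\vr v_t + \vr v\cdot\nabla v - \mu\Delta v + \nabla P = 0$ by $v_t$ and integrating (using $\div v = 0$ to kill the pressure term) gives the identity
\begin{equation*}
\int_\Omega \vr|v_t|^2\dx + \frac{\mu}{2}\frac{d}{dt}\int_\Omega|\nabla v|^2\dx = -\int_\Omega \vr\,(v\cdot\nabla v)\cdot v_t\dx.
\end{equation*}
The right-hand side is the troublesome convective term; I would bound it by $\tfrac12\|\sqrt\vr\,v_t\|_2^2$ plus $\tfrac12\|\sqrt\vr\,v\cdot\nabla v\|_2^2$, and then control the latter using $\|\sqrt\vr\|_\infty\leq\sqrt{\vr^*}$ together with Ladyzhenskaya-type interpolation ($\|v\|_4\lesssim\|v\|_2^{1-d/4}\|\nabla v\|_2^{d/4}$) to absorb $v\cdot\nabla v$ by $\nabla^2 v$ and $\nabla v$. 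The elliptic regularity for the Stokes operator, $\|\nabla^2 v\|_2 + \|\nabla P\|_2 \leq C\|\sqrt\vr\,v_t + \sqrt\vr\,v\cdot\nabla v\|_2 \lesssim \sqrt{\vr^*}\|v_t\|_{L_2(\vr)} + \text{l.o.t.}$, is what converts the $\vr|v_t|^2$ term into genuine control of the second derivatives appearing in \eqref{decay:2a}.

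To extract \emph{exponential} decay rather than mere integrability, I would insert the weight $e^{2\beta_2 t}$ before integrating in time. The mechanism is that, after the convective term is absorbed, one is left with a differential inequality of the form $\frac{d}{dt}\bigl(\mu\|\nabla v\|_2^2\bigr) + c\|\sqrt\vr\,v_t\|_2^2 + c\|\nabla^2 v\|_2^2 \leq 0$ (modulo lower-order pieces); multiplying by $e^{2\beta_2 t}$ produces a spurious term $2\beta_2 e^{2\beta_2 t}\mu\|\nabla v\|_2^2$, which must be reabsorbed. Here the Poincaré inequality \eqref{Poin} is decisive: it lets me compare $\|\nabla v\|_2^2$ with the dissipation, and provided $\beta_2$ is chosen small enough (strictly below $\beta_1$, of the announced form $\beta_2 = c_\Omega\mu/(\vr^*\delta^2)$), the spurious term is dominated. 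Integrating the resulting inequality over $\R_+$ and using the already-established decay \eqref{decay:1} of $\int_\Omega\vr|v|^2\dx$ to control the initial reservoir of energy then yields both the pointwise bound \eqref{decay:2} for $\|\nabla v(t)\|_2$ and the time-integrated bound \eqref{decay:2a}. The intermediate estimate \eqref{decay:2c} follows immediately from \eqref{decay:2} by Poincaré and the Sobolev embedding $H^1_0\hookrightarrow L_p$ (valid for the stated range of $p$ in each dimension).

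The main obstacle I anticipate is the nonlinear convective term $\int_\Omega\vr\,(v\cdot\nabla v)\cdot v_t\dx$ in the two- versus three-dimensional cases. In 2D the Ladyzhenskaya inequality closes the estimate for arbitrarily large data, whereas in 3D the interpolation exponents force a genuine smallness condition, which is exactly why Theorem \ref{thm:d=3} carries the hypothesis \eqref{small1}; I would need to verify that this smallness is precisely what allows the convective contribution to be absorbed into the dissipative terms. A secondary technical point is the low time regularity near $t=0$: the quantity $\|\nabla v\|_2^2$ is controlled at $t=0$ by $\|\nabla v_0\|_2$ through \eqref{v0}, so no singular time weight is needed here (in contrast to the $\sqrt t$ weights appearing in \eqref{reg:2d} and \eqref{reg:3d} for higher-order quantities), and this is why the estimates in this lemma are clean. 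The remaining care is simply to keep track of constants so that the threshold $\beta_2<\beta_1$ comes out with the claimed dependence on $\vr^*$, $\mu$ and the geometry of $\Omega$.
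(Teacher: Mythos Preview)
Your strategy is essentially the paper's: test the momentum equation by $v_t$, split off the convective term by Cauchy--Schwarz, and invoke Stokes regularity to trade $\|\sqrt\vr\,v_t\|_2^2$ for control of $\|\nabla^2 v\|_2^2+\|\nabla P\|_2^2$. Two points need sharpening.

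First, the mechanism for absorbing the spurious term $2\beta_2 e^{2\beta_2 t}\mu\|\nabla v\|_2^2$. The Poincar\'e inequality \eqref{Poin} bounds $\|v\|_2$ by $\|\nabla v\|_2$, not $\|\nabla v\|_2$ by your dissipation $\|\sqrt\vr\,v_t\|_2^2+\|\nabla^2 v\|_2^2$. The paper instead \emph{adds the basic energy identity} \eqref{ene:1} to the $v_t$-tested inequality, so that $\|\nabla v\|_2^2$ itself appears as dissipation. This yields a differential inequality for the combined Lyapunov functional $E=\|\nabla v\|_2^2+\tfrac12\|\sqrt\vr\,v\|_2^2$ of the form $\frac{d}{dt}E+E+D\leq\text{(residual)}$, after which the exponential weight works cleanly. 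A second-order Poincar\'e inequality $\|\nabla v\|_2\lesssim\|\nabla^2 v\|_2$ for $v\in H^1_0$ would also do the job, but that is not what \eqref{Poin} says.

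Second, and more substantively, your phrase ``modulo lower-order pieces'' hides the actual crux. After interpolation and absorption of the $\|\nabla^2 v\|_2^2$ piece, the convective residual is \emph{not} lower order: in 2D one is left with $C\|\sqrt\vr\,v\|_2^4\|v\|_2^2\|\nabla v\|_2^4$, in 3D with $C\|\sqrt\vr\,v\|_2^2\|\nabla v\|_2^8$. These are superlinear in $\|\nabla v\|_2^2$ and cannot be absorbed into a linear dissipation term. The paper's key step (called ``the crucial observation'' there) is to import the uniform-in-time bound $\sup_{t\geq0}\|\nabla v(t)\|_2\leq C_0$ from the existence theory (Propositions~3.1 and~3.3 of \cite{DM1}, already contained in the $L_\infty(\R_+;H^1)$ part of \eqref{reg:1}). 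Combined with the basic decay \eqref{decay:1} for the $\|\sqrt\vr\,v\|_2$ factors, the residual then becomes $C_0 e^{-\gamma t}$ and the inequality closes. In particular, in 3D the smallness \eqref{small1} is not invoked directly in this decay proof; it was used upstream in \cite{DM1} to secure that uniform $H^1$ bound, which is all that is needed here. You should make this input explicit rather than leaving it inside ``lower-order pieces''.
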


\begin{proof}
Clearly, performing a suitable time, space, density and velocity rescaling 
reduces the proof to the case $\vr^*=\mu=1,$ in a domain of diameter $1.$ 
Hence, we shall only prove the lemma in this case for notational simplicity.

Now, testing the momentum equation by $v_t$ yields 
$$\frac12\frac{d}{d t}\int_{\Omega} |\nabla v|^2\dx
+\int_{\Omega}\vr |v_t|^2 \dx =-\int_\Omega \vr v_t\cdot(v\cdot\nabla v)\dx\leq \frac12\int_{\Omega}\vr |v_t|^2 \dx
+\frac12\int_{\Omega}\vr|v\cdot\nabla v|^2\dx.
$$
The classical theory for the Stokes system yields for some $C_\Omega$ depending only 
on the shape of $\Omega,$
$$\|\nabla^2 v\|_2^2+\|\nabla P\|_2^2\leq C_\Omega \bigl(\|\vr v\cdot\nabla v\|_2^2+ \|\vr v_t\|_2^2\bigr)\cdotp$$
Hence we have (remember that $\vr^*=1$): 
\begin{equation} \label{ene:2}
\frac{d}{d t}\int_{\Omega} |\nabla v|^2\dx+\frac12\int_{\Omega}\bigl(\vr|v_t|^2 +
\frac1{C_\Omega}(|\nabla^2 v|^2+|\nabla P|^2)\bigr)\dx 
\leq \frac32\int_{\Omega}\vr|v\cdot\nabla v|^2\dx.
\end{equation}
Adding this inequality to \eqref{ene:1}, we get (up to a change of $C_\Omega$)
\begin{multline} \label{2.2}
\frac{d}{d t}\int_{\Omega}[|\nabla v|^2+\frac{1}{2}\vr|v|^2]\dx +\int_{\Omega}[|\nabla v|^2+\frac{1}{2}\vr|v|^2]\dx 
+\frac12\int_{\Omega}\bigl(\vr|v_t|^2 +\frac1{C_\Omega}(|\nabla^2 v|^2+|\nabla P|^2)\bigr)\dx\\ 
\leq \frac32\int_{\Omega}\vr|v\cdot\nabla v|^2\dx + \frac{1}{2}\int_{\Omega}\vr|v|^2\dx. 
\end{multline}
In order to estimate the first term on the right in the two dimensional case we first write 
$$\int_{\Omega}\vr|v\cdot\nabla v|^2\dx 
\leq \|\sqrt{\vr}v\|_2\|v\|_\infty\|\nabla v\|_4^2.$$
Hence, using the following two interpolation inequalities:
\begin{equation} \label{Lad}
\|z\|_4 \leq C \|z\|_2^{1/2}\|\nabla z\|_2^{1/2}\ \hbox{ and }\ 
\|z\|_\infty\leq C\|z\|_2^{1/2}\|\nabla^2z\|_2^{1/2},
\end{equation}
and remembering that $\vr^*=1,$ we get
$$\begin{aligned} 
\frac32\int_{\Omega}\vr|v\cdot\nabla v|^2\dx &\leq C \|\sqrt{\vr}v\|_2\|v\|_2^{1/2}\|\nabla v\|_2\|\nabla^2v\|_2^{3/2}\\
&\leq \frac1{4C_\Omega}\|\nabla^2v\|_2^2 + C
\|\sqrt{\vr}v\|_2^4\|v\|_2^{2}\|\nabla v\|_2^4.
\end{aligned}$$
The first term can be absorbed by the left-hand side of \eqref{2.2} and 
one can bound $\|v\|_2$ from \eqref{Poin}. Hence,  we get
for some $\gamma>0$ depending only on the shape of the domain, 
\begin{multline} \label{2.6}
\frac{d}{d t}\int_{\Omega}[|\nabla v|^2+\frac{1}{2}\vr|v|^2]\dx +\int_{\Omega}[|\nabla v|^2+\frac{1}{2}\vr|v|^2]\dx 
+\frac12\int_{\Omega}\bigl(\vr|v_t|^2 +\frac1{2C_\Omega}(|\nabla^2 v|^2+|\nabla P|^2)\bigr)\dx\\
\leq Ce^{-\gamma t} [1 + \|\nabla v\|_2^6].\end{multline}
The crucial observation is that $\|\nabla v(t)\|_2$ may  bounded uniformly in time in terms
of the data (see Propositions 3.1 and 3.3 in \cite{DM1}). Therefore, we obtain
\begin{equation} \label{2.6ba}
\frac{d}{d t}\int_{\Omega}[|\nabla v|^2+\frac{1}{2}\vr|v|^2]\dx 
+\frac12 \int_{\Omega}\bigl(|\nabla v|^2+\frac{1}{2}\vr|v|^2+\vr|v_t|^2 
+\frac1{2C_\Omega}(|\nabla^2 v|^2+|\nabla P|^2)\bigr)\dx
\leq C_0 e^{-\gamma t}.  
\end{equation}
This leads for any $\gamma_1 \in\R$ to 
\begin{multline} \label{2.6b}
\frac{d}{d t} \left[ e^{\gamma_1 t}\int_{\Omega}\left(|\nabla v|^2+\frac{1}{2}\vr|v|^2\right)\dx\right]\\
 +\Bigl(\frac12-\gamma_1\Bigr) \int_{\Omega}e^{{\gamma_1} t}\bigl(|\nabla v|^2+\frac{1}{2}\vr|v|^2+\vr|v_t|^2 
+\frac1{2C_\Omega}(|\nabla^2 v|^2+|\nabla P|^2)\bigr)\dx
\leq C_0 e^{-(\gamma -\gamma_1)t}.  
\end{multline}

In the three dimensional case, the only difference is the slightly more complicated treatment of the right-hand side of \eqref{2.2}. Nevertheless, by using H\"older inequality and Gagliardo-Nirenberg inequality, 
we arrive (still using that $\vr^*=1$) at:
$$\begin{aligned}
\int_{\Omega}\vr |v\cdot\nabla v|^2 &\leq 
\|\vr^{1/4} v\|_4^2 \|\nabla v\|_4^2\\
&\leq 
C\|\sqrt{\vr}v\|_2^{1/2}\|v\|_6^{3/2}\|\nabla v\|_2^{1/2}\|\nabla^2 v\|_2^{3/2}\\
&\leq C\|\sqrt{\vr}v\|_2^{1/2}\|\nabla v\|_2^2\|\nabla^2 v\|_2^{3/2} \\
&\leq \frac{1}{2C_\Omega}\|\nabla^2 v\|_2^2 + C\|\sqrt{\vr}v\|_2^2\|\nabla v\|_2^8 \leq 
\frac{1}{2C_\Omega}\|\nabla^2 v\|_2^2 + C_0e^{-\beta_1 t}, \end{aligned} $$
where in the last passage we have used \eqref{decay:1} and uniform boundedness of $\|\nabla v(t)\|_2$.
Again, the first term can be absorbed by the left hand side of \eqref{2.2} and we obtain \eqref{2.6b}. 

Now, choosing e.g. $\gamma_1=\min(\gamma,1/4)$ and integrating \eqref{2.6b} on $[0,t]$ yields
$$\displaylines{e^{\gamma_1t}\Bigl(\|\nabla v(t)\|_2^2+\frac12\|\sqrt{\vr(t)}\,v(t)\|_2^2\Bigr)
+\Bigl(\frac12-\gamma_1\Bigr)\int_0^te^{\gamma_1s}\Bigl(\|\nabla v\|_2^2+\frac12\|\sqrt\vr\,v\|_2^2
\hfill\cr\hfill+\frac12\|\sqrt\vr\,v_t\|_2^2+\frac1{2C_\Omega}(\|\nabla^2 v\|_2^2+\|\nabla P\|_2^2)\Bigr)\ds
\leq\frac{C_0}{\gamma-\gamma_1}
+\|\nabla v_0\|_2^2+\frac12\|\sqrt{\vr_0}\,v_0\|_2^2,}$$
which readily gives \eqref{decay:2} and \eqref{decay:2a}.
As for \eqref{decay:2c}, it just
results from Poincar\'e inequality and Sobolev embedding.

\end{proof}


\subsection{Decay of time derivatives}
Here we  estimate  time and time-space derivatives.
\begin{lem}\label{l:decay2}
Let $(\vr,v)$ be a solution to \eqref{sys} given either by Theorem \ref{thm:d=2} or by Theorem \ref{thm:d=3}. Then, there exists a positive constant $\beta_3<\beta_2$ (still of the form $\beta_3=c \mu/(\vr^*\delta^2)$) 
such that 
\begin{equation} \label{decay:3}
\sup_{t\in \R_+}\int_{\Omega}te^{2\beta_3  t}\vr|v_t|^2\dx +  \int_0^{\infty} e^{2\beta_3  t}\left[ \|\sqrt{t}\nabla v_t\|_2^2+ 
\|\sqrt{t} v_t\|_2^2
\right]\dt \leq C_0 
\end{equation}
for some positive constant $C_0$ depending only on the data.
\end{lem}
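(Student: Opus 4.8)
The plan is to obtain \eqref{decay:3} from a weighted energy estimate for $v_t$, got by differentiating the momentum equation in time, testing with $v_t$, and inserting the weight $\psi(t):=t\,e^{2\beta_3t}$. The factor $t$ is forced by the fact that Theorems~\ref{thm:d=2} and \ref{thm:d=3} only supply $\sqrt{t\vr}\,v_t\in L_\infty(0,T;L_2)$, so that $\|\sqrt{\vr}\,v_t(t)\|_2^2$ may blow up like $1/t$ as $t\to0^+$, while the factor $e^{2\beta_3t}$ encodes the decay. A point used repeatedly is that, since $v|_{\de\Omega}=0$ for all times, also $v_t\in H^1_0(\Omega)$, so that the Poincar\'e inequality \eqref{Poin} applies to $v_t$: this is essential because the density may vanish and hence $\|v_t\|_2$ \emph{cannot} be controlled by $\|\sqrt{\vr}\,v_t\|_2$.

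First, I differentiate $\vr v_t+\vr v\cdot\nabla v-\Delta v+\nabla P=0$ in time and test with $v_t$. As $\div v_t=0$ and $v_t\in H^1_0$, the pressure term vanishes and $-\int_\Omega\Delta v_t\cdot v_t=\|\nabla v_t\|_2^2$. Using $\vr_t=-\div(\vr v)$ to rewrite the contributions of $\de_t(\vr v_t)$ and $\de_t(\vr v\cdot\nabla v)$, and integrating by parts, one reaches
\[
\frac12\frac{d}{dt}\int_\Omega\vr|v_t|^2\dx+\int_\Omega|\nabla v_t|^2\dx=R(t),
\]
where $R$ collects the transport and convection terms $\int_\Omega\vr_t|v_t|^2$, $\int_\Omega\vr_t(v\cdot\nabla)v\cdot v_t$ and $\int_\Omega\vr(v_t\cdot\nabla)v\cdot v_t$. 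Each is bounded by combining $\vr\le\vr^*$, the interpolation inequalities \eqref{Lad} in 2D (resp.\ Gagliardo--Nirenberg in 3D), Poincar\'e's inequality for $v_t$, and the exponential decay of $\|v\|_p$, $\|\nabla v\|_2$, $\|\nabla^2v\|_2$ from Lemma~\ref{l:decay1}. This yields, for every $\ep>0$,
\[
|R(t)|\le\ep\,\|\nabla v_t\|_2^2+C\,g(t)\,\|\sqrt{\vr}\,v_t\|_2^2+r(t),
\]
where, typically, $g(t)=\|\nabla v\|_2\,\|\nabla^2v\|_2$ belongs to $L_1(\R_+)$ by Lemma~\ref{l:decay1} and Cauchy--Schwarz, and $r\in L_1(\R_+)$ gathers fast-decaying remainders; the $\ep$-term will be absorbed by the dissipation.

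Next, I multiply by $\psi=t\,e^{2\beta_3t}$ and recast the left-hand side as
\[
\frac{d}{dt}\Bigl(\tfrac12\psi\,\|\sqrt{\vr}\,v_t\|_2^2\Bigr)+\psi\,\|\nabla v_t\|_2^2
=\tfrac12\psi'\,\|\sqrt{\vr}\,v_t\|_2^2+\psi\,R,\qquad \psi'=e^{2\beta_3t}(1+2\beta_3t).
\]
The delicate term is $\tfrac12\psi'\|\sqrt{\vr}\,v_t\|_2^2$, which I split into $\tfrac12e^{2\beta_3t}\|\sqrt{\vr}\,v_t\|_2^2$ --- integrable on $\R_+$ by \eqref{decay:2a} as soon as $\beta_3\le\beta_2$ --- and $\beta_3\,t\,e^{2\beta_3t}\|\sqrt{\vr}\,v_t\|_2^2\le\beta_3\vr^*C_P^2\,\psi\,\|\nabla v_t\|_2^2$, the latter being absorbed into the dissipation $\psi\|\nabla v_t\|_2^2$ once $\beta_3$ is small enough; this fixes $\beta_3<\beta_2$ of the announced form $c\mu/(\vr^*\delta^2)$. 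Setting $E(t):=\tfrac12\psi(t)\|\sqrt{\vr}\,v_t(t)\|_2^2$, I obtain an inequality $\frac{d}{dt}E+\tfrac12\psi\|\nabla v_t\|_2^2\le C g(t)E+h(t)$ with $g,h\in L_1(\R_+)$ ($h$ gathering the $e^{2\beta_3t}\|\sqrt{\vr}\,v_t\|_2^2$ term and $\psi\,r$). Since $\psi(0)=0$ and $t\,\|\sqrt{\vr}\,v_t\|_2^2$ stays bounded as $t\to0^+$ by the regularity in the Theorems, $E$ is bounded near $0$, so Gr\"onwall's lemma gives $\sup_tE(t)\le C_0$; integrating once more yields $\int_0^\infty\psi\|\nabla v_t\|_2^2\dt\le C_0$, and Poincar\'e then gives $\int_0^\infty\psi\|v_t\|_2^2\dt\le C_P^2\int_0^\infty\psi\|\nabla v_t\|_2^2\dt\le C_0$. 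With $\psi=t\,e^{2\beta_3t}$ these are exactly the three terms of \eqref{decay:3}.

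The main obstacle is the interplay between vacuum and the two weights. Because $\vr$ may vanish, every factor $v_t$ not carrying a $\sqrt{\vr}$ must be handled through Poincar\'e and the gradient dissipation rather than through the kinetic energy $\|\sqrt{\vr}\,v_t\|_2$; and the genuinely subtle step is the bookkeeping of the weight, i.e.\ showing that the term produced by $\psi'$ splits precisely into one piece made integrable by Lemma~\ref{l:decay1} and one piece absorbable into the dissipation for small $\beta_3$. The 3D case differs only in replacing \eqref{Lad} by the relevant Gagliardo--Nirenberg inequalities and invoking the smallness \eqref{small1} to keep the nonlinear coefficients under control. Finally, the formal differentiation in time and the integrations by parts are justified by carrying them out on the approximate solutions underlying Theorems~\ref{thm:d=2} and \ref{thm:d=3} and passing to the limit.
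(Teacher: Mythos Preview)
Your proposal is correct and follows essentially the same route as the paper: differentiate the momentum equation in time, test against $v_t$ with the weight $te^{2\beta t}$, split the contribution of the weight's derivative into the integrable piece $e^{2\beta t}\|\sqrt\vr\,v_t\|_2^2$ (controlled by Lemma~\ref{l:decay1}) and the piece $\beta te^{2\beta t}\|\sqrt\vr\,v_t\|_2^2$ absorbed via Poincar\'e for small $\beta$, bound the nonlinear terms by Gagliardo--Nirenberg together with the decay of Lemma~\ref{l:decay1}, and close by Gr\"onwall. The only cosmetic difference is that the paper multiplies by $\sqrt t\,e^{\beta t}$ before testing (producing its terms $I_1$--$I_5$) whereas you test first and then multiply by $\psi$; also, in 3D the paper does not invoke \eqref{small1} directly but only the uniform-in-time bound on $\|\nabla v(t)\|_2$ that it implies.
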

\begin{proof}  We keep the assumption $\delta=\mu=\vr^*=1.$
Now, differentiating the momentum equation in time and multiplying by $\sqrt{t}e^{\beta  t},$ we obtain  
\begin{multline}\label{eq:ppp}
\vr(\sqrt{t}e^{\beta  t}v_t)_t +\sqrt{t}e^{\beta  t}\vr_tv_t
-\frac{1}{2\sqrt{t}}e^{\beta  t}\vr v_t -
\beta  \sqrt{t}e^{\beta  t}\vr v_t+\sqrt{t}e^{\beta  t}\vr_tv\cdot\nabla v \\+\sqrt{t}e^{\beta  t}\vr v_t\cdot\nabla v +\sqrt{t}e^{\beta  t}\vr v\cdot\nabla v_t -\Delta(\sqrt{t}e^{\beta  t}v_t)+\nabla (\sqrt{t}e^{\beta  t}P_t)=0.\end{multline}
Testing \eqref{eq:ppp}  with $\sqrt{t}e^{\beta  t}v_t$ yields
\begin{equation} \label{2.7}
\frac{1}{2}\frac{d}{d t}\int_{\Omega}te^{2\beta  t}\vr |v_t|^2\dx+\int_{\Omega}te^{2\beta  t}|\nabla v_t|^2 = \sum_{i=1}^5 I_i,    
\end{equation}
where, using $\vr_t=-\div(\vr v)$ in the third term on the left-hand side of \eqref{eq:ppp}, we have 
\begin{align*}
&I_1 = \frac{1}{2}\int_{\Omega}e^{2\beta  t}\vr|v_t|^2\dx,
&I_2 = -\int_{\Omega}(\sqrt{t}e^{\beta  t}\vr_t v \cdot \nabla v) \cdot(\sqrt{t}e^{\beta  t} v_t)\dx,\\
&I_3 = -\int_{\Omega}(\sqrt{t}e^{\beta  t}\vr v_t \cdot \nabla v) \cdot(\sqrt{t}e^{\beta  t} v_t) \dx,
&I_4 = -2\int_{\Omega}(\sqrt{t}e^{\beta  t}\vr v \cdot \nabla v_t) \cdot(\sqrt{t}e^{\beta  t} v_t) \dx,\\
&I_5 = \beta  \int_\Omega te^{2\beta  t}\vr |v_t|^2. &
\end{align*}

Now, we estimate the right-hand side of \eqref{2.7}. From the continuity equation, we have 
$$I_2 = \int_{\Omega} te^{2\beta  t}\div(\vr v) (v \cdot \nabla v) \cdot v_t \dx = 
- \int_{\Omega} t  e^{2\beta  t} \vr v \cdot \nabla[(v\cdot\nabla v)\cdot v_t]\dx.$$
Therefore, 
$$|I_2| \leq \int_{\Omega}t e^{2\beta  t} \vr|v|\left( |\nabla v|^2 |v_t| + |v||\nabla^2 v||v_t| + |v||\nabla v||\nabla v_t| \right)\dx =: I_{21}+I_{22}+I_{23}.  $$
For $I_{21}$ in the two-dimensional case, we have 
$$\begin{aligned}
|I_{21}| &\leq \int_\Omega\bigl(\sqrt{t\vr}|v||\nabla v|^2\sqrt{t\vr}|v_t|e^{2\beta  t}\bigr)\dx\\
&\leq \|v\|_{\infty}^2\|\sqrt{t\vr} e^{\beta  t}v_t\|_2^2 + t e^{2\beta  t}\|\nabla v\|_4^4,\end{aligned}
$$
so using \eqref{Lad} and \eqref{decay:2} we get  
\begin{eqnarray} \label{2.27}
|I_{21}| &\!\!\!\leq\!\!\!& \|v\|_{\infty}^2\|\sqrt{t\vr}e^{\beta  t}v_t\|_2^2 + t\|\nabla v\|_2^2 e^{2\beta  t}\|\nabla^2 v\|_2^2\nonumber\\
&\!\!\!\leq\!\!\!& \|v\|_{\infty}^2\|\sqrt{t\vr}e^{\beta  t}v_t\|_2^2 + Ce^{-2\beta_2 t}
\|e^{\beta  t}\nabla^2 v\|_2^2.
\end{eqnarray}

In the three dimensional case we do not have \eqref{Lad}, but we can write using H\"older inequality
and the embedding $H^1_0\hookrightarrow L_6,$
$$\begin{aligned}
|I_{21}| & \leq \sqrt{t} e^{2\beta  t} \int_{\Omega}\sqrt{t\vr}|v_t||v||\nabla v|^2\dx\\ &\leq 
\sqrt{t} e^{2\beta  t} \|\sqrt{t\vr}v_t\|_4 \|v\|_6 \|\nabla v\|_{24/7}^2 \\
&\leq \sqrt{t} e^{\beta t} \|\sqrt{t\vr}v_t\|_2^{1/4}\|\sqrt{t}e^{\beta t}v_t\|_6^{3/4}\|v\|_6\|\nabla v\|_{24/7}^2\\ 
&\leq \frac{1}{10}\|\sqrt{t}e^{\beta  t}\nabla v_t\|_2^2 + Ct^{4/5}e^{8\beta t/5} \|\sqrt{t\vr}v_t\|_2^{2/5}\|\nabla v\|_{24/7}^{16/5}\|\nabla v\|_2^{8/5}. 
\end{aligned}$$
Then, using the Gagliardo-Nirenberg inequality
$\|\nabla v\|_{24/7}^{16/5}\leq C \|\nabla v\|_2^{6/5}\|\nabla^2 v\|_2^2$ and \eqref{decay:2},
we discover that
\begin{equation*} 
\begin{aligned}
|I_{21}| &\leq \frac{1}{10}\|\sqrt{t}e^{\beta  t}\nabla v_t\|_2^2 + t^{4/5}e^{-4\beta t/5}\|\sqrt{t\vr} e^{\beta  t}v_t\|_2^{2/5} \|\nabla v\|_2^{14/5}\|e^{\beta  t}\nabla^2 v\|_2^2\\
&\leq \frac{1}{15}\|\sqrt{t}e^{\beta  t}\nabla v_t\|_2^2 + C_0e^{-(4\beta+14\beta_2)t/5}
\|e^{\beta  t}\nabla^2 v\|_2^2\|\sqrt{t\vr}e^{\beta  t}v_t\|_2^{2/5}.
\end{aligned}
\end{equation*}
Therefore, there exits $c>0$ such that 
\begin{equation} \label{I21}
|I_{21}|\leq \frac{1}{10}\|\sqrt{t}e^{\beta  t}\nabla v_t\|_2^2 + C_0e^{-ct}\|e^{\beta  t}\nabla^2 v\|_2^2\left(1+\|\sqrt{t\vr}e^{\beta  t}v_t\|_2^2 \right). 
\end{equation}
The remaining two parts of $I_2$ are simpler: we have 
\begin{eqnarray} \label{I22}
I_{22}=\int_{\Omega}\sqrt{t\vr} e^{\beta  t} |\nabla^2 v||v|^2\sqrt{t\vr}e^{\beta  t}|v_t|
&\!\!\!\leq\!\!\!& \|\sqrt{t\vr} e^{\beta  t}\nabla^2v\|_2\|v\|_6^2\|\sqrt{t\vr} e^{\beta  t}v_t\|_6\nonumber\\
&\!\!\!\leq\!\!\!& C\|\sqrt{t} e^{\beta  t}\nabla^2v\|_2\|\nabla v\|_2^2\|\sqrt{t} e^{\beta  t}\nabla v_t\|_2\nonumber\\
&\!\!\!\leq\!\!\!&  \frac{1}{15}\|\sqrt{t}e^{\beta  t}\nabla v_t\|_2^2 + C_0e^{-4\beta_2t}
\|\sqrt{t} e^{\beta  t}\nabla^2v\|_2^2,
\end{eqnarray}
and 
\begin{eqnarray} \label{I23}
I_{23} & \!\!\!\leq\!\!\!&
\|\sqrt{t\vr} e^{\beta  t}\nabla v_t\|_2\|v\|_6^2\|\sqrt{t\vr} e^{\beta  t}\nabla v\|_6\nonumber\\
& \!\!\!\leq\!\!\!& C\|\sqrt{t} e^{\beta  t}\nabla v_t\|_2\|\nabla v\|_2^2\|\sqrt{t} e^{\beta  t}\nabla^2 v\|_2\nonumber\\& \!\!\!\leq\!\!\!&
 \frac{1}{15}\|\sqrt{t}e^{\beta  t}\nabla v_t\|_2^2 + C_0e^{-4\beta_2t}
\|\sqrt{t} e^{\beta  t}\nabla^2v\|_2^2.
\end{eqnarray}
Hence, putting  \eqref{I21}, \eqref{I22} and \eqref{I23} together, we obtain for some $c>0,$
\begin{equation} \label{I2}
I_2 \leq  \frac{1}{5} \|\sqrt{t}e^{\beta  t}\nabla v_t\|_2^2 
+ C_0e^{-ct}\|e^{\beta  t}\nabla^2 v\|_2^2\bigl(1+\|\sqrt{t\vr}e^{\beta  t}v_t\|_2^2\bigr)\cdotp
\end{equation}
Next, we estimate $I_3$ as follows:
\begin{eqnarray} \label{I3}
I_3 &\!\!\!\leq\!\!\!& \|\nabla v\|_2 \|\sqrt{t\vr}e^{\beta  t}v_t\|_4^2 \nonumber\\[1ex]
&\!\!\!\leq\!\!\!&
C \|\nabla v\|_2 \|\sqrt{t\vr}e^{\beta  t}v_t\|_2^{1/2}
\|\sqrt{t}e^{\beta  t}v_t\|_6^{3/2}\nonumber\\[1ex]    
&\!\!\!\leq\!\!\!& C \|\nabla v\|_2 \|\sqrt{t\vr}e^{\beta  t}v_t\|_2^{1/2}\|\sqrt{t}e^{\beta  t}\nabla v_t\|_2^{3/2}\nonumber\\
&\!\!\!\leq\!\!\!& \frac{1}{10} \|\sqrt{t}e^{\beta  t}\nabla v_t\|_2^2 + C \|\sqrt{t\vr}e^{\beta  t}v_t\|_2^2 \|\nabla v\|_2^4\nonumber\\
&\!\!\!\leq\!\!\!& \frac{1}{10} \|\sqrt{t}e^{\beta  t}\nabla v_t\|_2^2 + C_0e^{-4\beta_2t} \|\sqrt{t\vr}e^{\beta  t}v_t\|_2^2
\end{eqnarray}
and
\begin{equation} \label{I4}
I_4 \leq 2\int_{\Omega}\vr|v||\sqrt{t}e^{\beta  t}\nabla v_t||\sqrt{t}e^{\beta  t}v_t|\dx\leq \frac{1}{10} \|\sqrt{t}e^{\beta  t}\nabla v_t\|_2^2 + C\|v\|_{\infty}^2 \|\sqrt{t\vr}e^{\beta  t}v_t\|_2^2.   
\end{equation}
Finally, as \eqref{Poin} also applies to $v_t,$  one may write for sufficiently  small $\beta,$ 
\begin{equation}\label{I5} I_5 \leq \frac{1}{10} \|\sqrt{t}e^{\beta  t}\nabla v_t\|_2^2.
\end{equation}
Combining \eqref{2.7},\eqref{I2}, \eqref{I3}, \eqref{I4} and \eqref{I5}  we arrive at 
\begin{multline} \label{2.30}
\frac{1}{2}\frac{d}{d t}\int_{\Omega}t e^{2\beta  t}\vr |v_t|^2\dx+\frac{1}{2} \int_{\Omega}te^{2\beta  t}|\nabla v_t|^2\dx \\
\leq  C_0\bigl(e^{-ct}\|e^{\beta  t}\nabla^2 v\|_2^2+\|v\|_\infty^2\bigr)\|\sqrt{t\vr} e^{\beta  t} v_t\|_2^2 \\
+C_0\bigl(\|\sqrt\vr e^{\beta  t}v_t\|_2^2+e^{-ct}\|e^{\beta  t}\nabla^2 v\|_2^2\bigr)\cdotp
\end{multline}

By virtue of Lemma \ref{l:decay1}, the last line is integrable on $\R_+$ for any $\beta \leq \beta_2$, as well as the prefactor of the second line (observe that $H^2\hookrightarrow L_\infty$). 
Hence, Gronwall inequality ensures that
\begin{equation} \label{2.31} 
\sup_{t\in \R_+}\int_{\Omega} t e^{2\beta  t} \vr|v_t|^2\dx + \int_{\R_+} e^{2\beta  t}\|\sqrt{t}\nabla v_t\|_2^2\dt <\infty.
\end{equation}
Now, Poincar\'e inequality implies the bound for $\|e^{\beta t}\sqrt{t}v_t\|_{L_2(\R_+;L_2)}$, which completes the proof.

\end{proof}


\subsection{Shift of integrability and control of  $\|\nabla v\|_{\infty}$}

Using the decay estimates we proved so far will finally enable us to establish 
similar properties for higher order norms:
\begin{lem}\label{l:decay3}
Let $(\vr,v)$ be a solution to \eqref{sys} given either by Theorem \ref{thm:d=2} or by Theorem \ref{thm:d=3}. Then, the following properties hold true:
\begin{align}  
  \label{2.10e}
&\sqrt{t}e^{\beta_2 t}(\nabla^2 v,\nabla P) \in L_p(\R_+;L_s(\Omega)) \quad 2\leq s \leq 6\andf p=\frac{4s}{3s-6}\quad\hbox{if }\ d=3,\\[3pt]
 \label{2.10ee}
&\sqrt{t}e^{\beta_2 t}(\nabla^2 v,\nabla P) \in L_p(\R_+;L_s(\Omega)) \quad 2\leq s <\infty\andf p=\frac{2s}{s-2}\quad\hbox{if }\ d=2,\\[3pt]
\label{2.10f}
&e^{\beta_4  t} \nabla^2 v \in L_1(\R_+;L_r(\Omega)) \quad \textrm{for some} \quad r>d, 
\\[3pt]
\label{2.10}
&e^{\beta_4  t}\nabla v \in L_1(\R_+;L_\infty(\Omega)),
\end{align}
for some $0<\beta_4=\frac{c\mu}{\vr^*\delta^2}<\beta_2$, where $\beta_2$ is from Lemma \ref{l:decay1}.
\end{lem}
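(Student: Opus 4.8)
The plan is to obtain all four properties from stationary Stokes regularity applied to the momentum equation pointwise in time, the forcing being controlled by the decay bounds of Lemmas \ref{l:decay1} and \ref{l:decay2}. Reading \eqref{sys} as the Stokes system
\[-\Delta v+\nabla P=-\vr(v_t+v\cdot\nabla v),\qquad \div v=0,\qquad v|_{\partial\Omega}=0,\]
the $L_s$-theory on the smooth bounded domain $\Omega$ gives, for every $2\le s<\infty$ and a.e. $t,$
\[\|\nabla^2v(t)\|_s+\|\nabla P(t)\|_s\le C_s\,\|\vr(v_t+v\cdot\nabla v)(t)\|_s.\]
Thus \eqref{2.10e} and \eqref{2.10ee} reduce to placing $\sqrt t\,e^{\beta_2 t}\vr(v_t+v\cdot\nabla v)$ in $L_p(\R_+;L_s)$ for the prescribed pairs $(p,s).$

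I would treat the two pieces of the forcing separately. For the leading term I write $\vr v_t=\sqrt\vr\,(\sqrt\vr v_t)$ and use $\vr\le\vr^*,$ so that $\|\vr v_t\|_s\le\sqrt{\vr^*}\,\|\sqrt\vr v_t\|_s.$ Lemma \ref{l:decay2} provides $\sqrt t\,e^{\beta_2 t}\sqrt\vr v_t\in L_\infty(\R_+;L_2),$ while the same lemma combined with Poincar\'e and the embedding $H^1_0\hookrightarrow L_6$—both valid on all of $\Omega$ and hence insensitive to the set where $\vr$ vanishes—gives $\sqrt t\,e^{\beta_2 t}\sqrt\vr v_t\in L_2(\R_+;L_6)$ if $d=3$ (and $L_2(\R_+;L_q)$ for every finite $q$ if $d=2$). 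Interpolating these two bounds in the space variable and tracking the induced time integrability produces exactly $p=\tfrac{4s}{3s-6}$ for $2\le s\le 6$ (respectively $p=\tfrac{2s}{s-2}$ for $2\le s<\infty$ when $d=2$). The convective term $\vr v\cdot\nabla v$ is quadratic, hence subordinate: estimating $\|\vr v\cdot\nabla v\|_s\le\vr^*\|v\|_{s_1}\|\nabla v\|_{s_2}$ with $\tfrac1s=\tfrac1{s_1}+\tfrac1{s_2},$ bounding $\|v\|_{s_1}$ by \eqref{decay:2c} and $\|\nabla v\|_{s_2}$ by Gagliardo--Nirenberg interpolation between \eqref{decay:2} and the weighted $L_2$-in-time control of $\nabla^2v$ in \eqref{decay:2a}, one gains a factor $e^{-2\beta_2 t},$ which leaves ample room to absorb $\sqrt t\,e^{\beta_2 t}$ and still land in $L_p(\R_+;L_s).$ Applying the Stokes estimate pointwise and integrating then yields \eqref{2.10e}--\eqref{2.10ee}.

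For \eqref{2.10f} I would specialize the previous step to some $r\in(d,6)$ if $d=3$ (any $r\in(d,\infty)$ if $d=2$), for which the associated exponent satisfies $p=p(r)>2,$ and pass from $L_p$ to $L_1$ in time by H\"older against $t^{-1/2}e^{(\beta_4-\beta_2)t}.$ The factor $t^{-1/2}$ lies in $L_{p'}$ near $t=0$ precisely because $p'<2,$ while the strict gap $\beta_4<\beta_2$ secures convergence at infinity; this is exactly the step that forces the smaller rate $\beta_4.$ Finally, \eqref{2.10} follows from the Morrey embedding $W^1_r(\Omega)\hookrightarrow L_\infty(\Omega)$ with $r>d$: bounding $\|\nabla v\|_\infty\le C(\|\nabla v\|_r+\|\nabla^2v\|_r),$ the second summand is controlled by \eqref{2.10f} and the first by interpolating \eqref{decay:2} with the $s=2$ endpoint of \eqref{2.10e}/\eqref{2.10ee}, both of which become integrable in time once multiplied by $e^{\beta_4 t}.$

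The main obstacle is, once again, the vacuum: one never controls $v_t$ itself in $L_2,$ only $\sqrt\vr v_t,$ so the extra space integrability of the forcing must be routed through $\nabla v_t\in L_2$ (Poincar\'e on the whole of $\Omega$) rather than through the density-weighted quantity. Setting up the spatial interpolation so that it reproduces the precise exponents $p=\tfrac{4s}{3s-6}$ and $p=\tfrac{2s}{s-2},$ and checking that the quadratic term genuinely stays subordinate in each of these mixed norms, is where most of the bookkeeping lies; by contrast the passage to $L_1$ in time and the concluding Sobolev step are routine, at the mere cost of spending the gap $\beta_2-\beta_4>0.$
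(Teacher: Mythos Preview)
Your overall architecture is right: Stokes regularity pointwise in time, the $v_t$ part of the forcing handled by interpolating between $\sqrt{t\vr}\,e^{\beta t}v_t\in L_\infty(\R_+;L_2)$ and $\sqrt t\,e^{\beta t}\nabla v_t\in L_2(\R_+;L_2)$ from Lemma~\ref{l:decay2}, then the $L_p\to L_1$ passage by H\"older to get \eqref{2.10f} and \eqref{2.10}. Those steps match the paper.

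The gap is in your treatment of the convective term. You assert it is ``subordinate'' and can be placed in $L_p(\R_+;L_s)$ using only \eqref{decay:2}, \eqref{decay:2c} and \eqref{decay:2a}. In dimension three this fails. For any split $\|v\cdot\nabla v\|_s\le\|v\|_{s_1}\|\nabla v\|_{s_2}$ with $1/s=1/s_1+1/s_2$, both exponents must satisfy $s_1,s_2\ge s$. Since \eqref{decay:2c} caps $s_1\le6$, for $s>3$ one is forced into $s_2>6$, which Gagliardo--Nirenberg between $\|\nabla v\|_2$ and $\|\nabla^2v\|_2$ cannot reach. Even for $2\le s\le3$ the arithmetic does not close: taking $s_1=6$ and $s_2=6s/(6-s)$ gives $\|\nabla v\|_{s_2}\lesssim\|\nabla v\|_2^{1-\theta}\|\nabla^2v\|_2^{\theta}$ with $\theta=2-3/s$, and the required $p=4s/(3s-6)$ then satisfies $p\theta>2$; since \eqref{decay:2a} only yields $e^{\beta_2 t}\|\nabla^2v\|_2\in L_2(\R_+)$, the factor $\|\nabla^2v\|_2^{\theta}$ cannot be put in $L_p$ in time. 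In particular, the $s=2$ endpoint (which needs $L_\infty$ in time) is out of reach this way: there is no pointwise bound on $\|\nabla^2v\|_2$ available from the earlier lemmas.

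The paper closes this by a bootstrap rather than by treating $v\cdot\nabla v$ as lower order. It first proves $\sqrt t\,e^{\beta_2 t}\nabla^2v\in L_\infty(\R_+;L_2)$ by writing $\|v\cdot\nabla v\|_2\le\|v\|_6\|\nabla v\|_3\lesssim\|\nabla v\|_2^{3/2}\|\nabla^2v\|_2^{1/2}$, which puts a \emph{sublinear} power of the quantity under estimation on the right-hand side; Young's inequality then absorbs it. With that $L_\infty(\R_+;L_2)$ bound in hand, one controls $\sqrt t\,e^{\beta t/2}v\in L_\infty(\R_+;L_\infty)$ (via $\|v\|_\infty\lesssim\|\nabla v\|_2^{1/2}\|\nabla v\|_6^{1/2}$), and only then the $L_2(\R_+;L_6)$ endpoint follows. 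Interpolation gives the full range \eqref{2.10e}. So the convective term is not independently subordinate; it must be fed back through the Stokes estimate and absorbed. Your argument for \eqref{2.10f} and \eqref{2.10} is fine once \eqref{2.10e}--\eqref{2.10ee} are in place.
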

\begin{proof} We multiply \eqref{sys}$_1$ by $\sqrt{t}e^{\beta t}$, where $\beta=\beta_2$, and rewrite it as a Stokes system: 
\begin{align} \label{Stokes}
-\Delta \sqrt{t}e^{\beta t}v + \nabla \sqrt{t}e^{\beta t}P = -\sqrt{t}e^{\beta t} \vr v_t - \sqrt{t}e^{\beta t} \vr v \cdot \nabla v, \quad
\div \sqrt{t}e^{\beta t} v=0.
\end{align}
We start with proving \eqref{2.10e}.
By the interpolation inequality 
$$
\|f\|_q \leq \|f\|_2^{(6-q)/2q}\|f\|_6^{(3q-6)/2q}, \quad 2\leq q \leq 6,
$$
it is enough to prove 
\begin{equation} \label{2.10e1}
\sqrt{t}e^{\beta  t}(\nabla^2 v,\nabla P) \in L_\infty(\R_+;L_2(\Omega)) \cap L_2(\R_+;L_6(\Omega)).    
\end{equation}
By \eqref{decay:3} and Sobolev embedding, we have 
\begin{equation} \label{2.10b}
\sqrt{\vr t}e^{\beta  t} v_t \in L_\infty(\R_+;L_2(\Omega)) \cap L_2(\R_+;L_6(\Omega)).  
\end{equation}
Therefore, the elliptic regularity of \eqref{Stokes} implies
$$\begin{aligned}
    \|\sqrt{t} e^{\beta  t}(\nabla^2 v,\nabla P)\|_{L_\infty(\R_+;L_2)} &\leq 
    C_0\!+\! C\|\vr v \cdot \nabla \sqrt{t} e^{\beta  t}
    v\|_{L_\infty(\R_+;L_2)}\\
    \leq& C_0 \!+\!C \|e^{\beta t}v\|_{L_\infty(\R_+;L_6)}
    \|\sqrt{t} \nabla v\|_{L^\infty(\R_+;L_3)}\\
    \leq& C_0\!+\! C\sqrt t \,e^{-d \beta t/6}
    \|e^{\beta  t}v\|_{L_\infty(\R_+;L_6)}
    \|\nabla v\|_{L_\infty(\R_+;L_2)}^{1-d/6}
    \|\sqrt{t}e^{\beta  t} \nabla^2 v\|_{L_\infty(\R_+;L_2)}^{d/6}.\end{aligned}$$
So, by Young inequality and \eqref{decay:2c}, we get 
\begin{equation} \label{2.11}
     \|\sqrt{t} e^{\beta  t}(\nabla^2 v,\nabla P)\|_{L_\infty(\R_+;L_2)}\leq C_0.
\end{equation}
Similarly, starting from \eqref{Stokes} and thanks to Inequality \eqref{decay:3} and embedding
$H^1_\Omega\hookrightarrow L_6,$ we have
$$\begin{aligned}
    \|\sqrt{t}e^{\beta  t}(\nabla^2 v,\nabla P)\|_{L_2(\R_+;L_6)} &\leq C_0+C
    \|\sqrt{t}e^{\beta  t} v \cdot \nabla v\|_{L_2(\R_+;L_6)} \\
    &\leq C_0 + C \|e^{\beta  t/2 } \sqrt{t} v\|_{L_\infty(\R_+;L_\infty)}
    \|e^{\beta  t/2} \nabla v\|_{L_2(\R_+;L_6)}.
\end{aligned}$$
In  three-dimensional case, we have
$$
    \|\sqrt{t} e^{\beta  t/2} v\|_{L_\infty(\R_+;L_\infty)} \leq 
    C\|\sqrt{t} e^{\beta  t/2} \nabla v\|_{L_\infty(\R_+;L_2)}^{1/2}\|\sqrt{t} e^{\beta  t/2} \nabla v\|_{L_\infty(\R_+;L_6)}^{1/2},
$$
which implies
$$
\begin{aligned}
\|\sqrt{t}e^{\beta  t} \nabla^2 v\|_{L_2(\R_+;L_6)} &\leq
C_0 + C\|e^{\beta t/2}\sqrt{t}\nabla v\|_{L_\infty(\R_+;L_2)}^{1/2}
\|e^{\beta t/2}\nabla v\|_{L_2(\R_+;L_6)}\|\sqrt{t}e^{\beta t/2}\nabla v\|_{L_\infty(\R_+;L_6)}^{1/2}\\
& \leq C_0 + C_0\|e^{\beta t}\nabla v\|_{L_\infty(\R_+;L_2)}^{1/2}\|\sqrt{t}e^{\beta t/2}\nabla v\|_{L_\infty(\R_+;L_6)}^{1/2} \\
& \leq C_0 + C_0\|e^{\beta t}\nabla v\|_{L_\infty(\R_+;L_2)}
+\|\sqrt{t}e^{\beta t}\nabla^2 v\|_{L_\infty(\R_+;L_2)},
\end{aligned}
$$
where in the first passage we used \eqref{decay:2a} and Sobolev embedding to estimate $\|e^{\beta t/2}\nabla v\|_{L_2(\R_+;L_6)}$.
\smallbreak
Therefore, by \eqref{decay:2} and \eqref{2.11}, choosing $\beta$ small enough, we obtain 
\begin{equation*}
\|\sqrt{t}e^{\beta  t}(\nabla^2 v,\nabla P)\|_{L_2(\R_+;L_6)}\leq C_0.   
\end{equation*}
This  completes the proof of \eqref{2.10e1}, and thus also of \eqref{2.10e}.
The easier two-dimensional case (that is \eqref{2.10ee}) is left to the reader. 
\medbreak
In order to show the next part of the lemma, 
we note that for $\beta_4,\delta>0$ such that $\beta_4+\delta<\beta_2$ we have %
$$
\begin{aligned}
\int_1^{+\infty} e^{\beta_4 t}\|\nabla^2 v\|_{L_s}
&\leq \left(\int_1^{+\infty} e^{-\delta p't}\dt\right)^{1/p'}
\left(\int_1^{+\infty} e^{(\beta_4+\delta)pt}\|\nabla^2 v(t)\|_{L_s}^p\dt\right)^{1/p}\\[3pt]
&\leq C \|e^{\beta_2 t}\sqrt{t} \nabla^2 v\|_{L_p(\R_+;L_s)}.
\end{aligned}$$
For small times we can write 
$$
\int_0^1\|\nabla^2 v(t)\|_r \dt \leq 
\left(\int_0^1 t^{-\alpha p'}\right)^{1/p'}\int_0^1 t^{\alpha p}\|\nabla^2 v(t)\|_r^p \dt. 
$$
We can choose for instance $p=\frac{8}{5}$, which corresponds to $s=4$ in \eqref{2.10e}, then $\alpha=\frac{5}{16}$ so that $\alpha p=\frac{1}{2}$ and $\alpha p'<1$, so the first integral is again finite. This completes the proof of \eqref{2.10f}.
\smallbreak
As for  \eqref{2.10}, it results directly from \eqref{decay:2} and \eqref{2.10f} owing to a suitable Gagliardo-Nirenberg inequality that yields: 
\begin{equation*}
\|e^{\beta_4 t}\nabla v\|_{L_1(\R_+;L_\infty)}\leq C ( \|e^{\beta_4 t}\nabla v\|_{L_1(\R_+;L_2)}+\|e^{\beta_4 t}\nabla^2 v\|_{L_1(\R_+;L_r)}.    
\end{equation*}
The right-hand side is finite whenever $\beta_4<\beta_2$.
\end{proof}




\subsection* {Acknowledgments:}
The first two authors  have  been  partially supported by the ANR project INFAMIE (ANR-15-CE40-0011). The second and third author were partially supported by National Science Centre grant
No2018/29/B/ST1/00339 (Opus).  

\end{document}